\newcommand{\Xsub}[1]{\mathbin{\mathop{\times}\limits_{#1}}}
\definecolor{cite}{RGB}{44,123,182}
\definecolor{ref}{RGB}{215,25,28}
\newtheorem{theorem}{Theorem}
\newtheorem{definition}{Definition}
\newtheorem{proposition}{Proposition}
\newtheorem{example}{Example}
\newtheorem{lemma}{Lemma}
\newtheorem*{IGP}{Inverse Galois Problem}
\newtheorem{notation}{Notation}
\newtheorem{remark}{Remark}
\newcommand{\pare}[1]{\left( #1 \right)} 
\newcommand{\set}[1]{{\left\{ #1 \right\}}} 
\def\A{\mathbb{A}}
\def\X{\mathbb{X}}
\def\Q{\mathbb{Q}}
\def\Z{\mathbb{Z}}
\def\S{\mathbb{S}}
\def\F{\mathbb{F}}
\def\O{\mathcal{O}}
\def\Gal{{\operatorname{Gal}}}
\def\sgn{{\operatorname{sign}\,}}
\newcommand{\Cyc}[1]{ \Z/(#1) }
\newcommand{\Rub}{\mathcal{R}}
\newcommand{\Galf}[2]{ \Gal\left(#1,#2\right)}
\newcommand{\disc}[1]{\operatorname{disc}({#1})}
\providecommand{\keywords}[1]{\textbf{\textit{Keywords: }} #1}
\NewDocumentCommand{\cycle}{ O{\,} m }
 {
  (
  \alec_cycle:nn { #1 } { #2 }
  )
 }
\NewDocumentCommand{\npcycle}{ O{\,} m }
 {
  \npalec_cycle:nn { #1 } { #2 }
 }
\newif\ifshowcellnumber
\definecolor{Y}{RGB}{255,255,110}
\definecolor{R}{RGB}{255,70,75}
\definecolor{G}{RGB}{151,216,56}
\definecolor{B}{RGB}{51,152,237}
\definecolor{W}{RGB}{255,255,255}
\definecolor{O}{RGB}{255,165,0}
\newcommand{\TikZRubikFaceLeft}[9]{\def\myarrayL{#1,#2,#3,#4,#5,#6,#7,#8,#9}}
\newcommand{\TikZRubikFaceRight}[9]{\def\myarrayR{#1,#2,#3,#4,#5,#6,#7,#8,#9}}
\newcommand{\TikZRubikFaceTop}[9]{\def\myarrayT{#1,#2,#3,#4,#5,#6,#7,#8,#9}}
\newcommand{\BuildArray}{\foreach \X [count=\Y] in \myarrayL%
{\ifnum\Y=1%
\xdef\myarray{"\X"}%
\else%
\xdef\myarray{\myarray,"\X"}%
\fi}%
\foreach \X in \myarrayR%
{\xdef\myarray{\myarray,"\X"}}%
\foreach \X in \myarrayT%
{\xdef\myarray{\myarray,"\X"}}%
\xdef\myarray{{\myarray}}%
}
\pgfmathsetmacro\radius{0.1}
\numberwithin{equation}{section}
\begin{document}

\title{Rubik's as a Galois'}

\author{M. Mereb}
\author{L. Vendramin}

\address[Mereb]{IMAS--CONICET and Depto. de Matem\'atica, FCEN, Universidad de Buenos A
ires, Pab.~1, Ciudad Universitaria, C1428EGA, Buenos Aires, Argentina}
\email{mmereb@dm.uba.ar}

\address[Vendramin]{Department of Mathematics and Data
Science, Vrije Universiteit Brussel, Pleinlaan 2, 1050 Brussels, Belgium}
\email{Leandro.Vendramin@vub.be}

\keywords{Rubik's Cube, Inverse Galois Problem, parametric family}

\begin{abstract}
    We prove that the Rubik's Cube group can be realized as a 
    Galois group over the rationals. 
\end{abstract}

\maketitle


\section{Introduction}

It is a fundamental question in mathematics that seeks to determine whether every finite group can be realized as the Galois group of some field extension over the rational numbers. More concretely:

\begin{IGP}
    Given a finite group $G$, determine whether there exists a Galois extension of $\Q$ whose Galois group is isomorphic to $G$. Moreover, if there is such a Galois extension, find an explicit polynomial whose Galois group is~$G$.
\end{IGP}

The problem was first explicitly stated by mathematicians in the late 19th and
early 20th centuries, notably by Hilbert. For example, using his irreducibility
theorem, Hilbert showed \cite{MR1580277} that over the rationals there are infinitely many Galois
extensions with Galois group isomorphic to the symmetric group $\S_n$ and the
alternating group $\A_n$. 
A particularly elegant result in this context is a theorem by Schur, which concerns the Galois group of the 
$n$-th Taylor polynomial of the exponential function
\[
1 + X + \frac{1}{2!} X^2 + \cdots + \frac{1}{n!} X^n.
\]
Schur showed that the Galois group of this polynomial is isomorphic to the alternating group 
$\A_n$ if $n$ is divisible by 4, and to the symmetric group $\S_n$ otherwise; see \cite{MR925984} for a proof. There are other  families of polynomials with Galois groups isomorphic to the symmetric group. Particularly important for this paper is the work of Nart and Vila \cite{zbMATH01458933,zbMATH01458934}, 
where it is 
proved that the polynomial $X^n - X - 1$ has Galois group $\S_n$. The Nart--Vila theorem 
was later generalized by Osada in \cite{MR873881}.

At the beginning of the 20th century, Noether noted that the Inverse Galois Problem could be solved for a finite permutation group using the Hilbert irreducibility theorem, provided that the field of fractions of the ring of invariants of the group is a rational function field \cite{MR1511893}. However, it was later shown by Swan in \cite{MR244215} that not all rings of invariants of finite groups are rational, which means that Noether's method does not apply to every finite group.

For finite abelian groups, the Inverse Galois Problem can be easily solved using cyclotomic extensions; see for example \cite[Section 14.5]{MR1138725}. 

In 1937 Scholz \cite{MR1545668} and Reichardt \cite{MR1581540}, independently, proved that $p$-groups of odd order
are realizable as Galois groups over the rationals. Concerning solvable groups,
Shafarevich proved that finite solvable groups occur as Galois groups over the
rationals \cite{zbMATH03118500}. The original proof contains a mistake concerning the prime
2, later corrected in \cite{zbMATH04114880}. For a complete proof, we refer to the book 
\cite{MR2392026}. We note that Shafarevich's proof is not constructive, and so
does not produce a polynomial having a prescribed finite solvable group as a
Galois group. 

The most successful approach to the Inverse Galois Problem to date is based on the concept of rigidity. While the term ``rigidity'' was introduced by Thompson in \cite{zbMATH03879038}, the underlying idea was anticipated in earlier works by Shih \cite{zbMATH03438994}, Fried \cite{MR453746}, Belyi \cite{MR534593}, and Matzat \cite{MR524980}. This powerful method has been employed to realize many families of groups. For example, most of the sporadic simple have been realized as Galois groups 
over the rationals, like the four Mathieu groups
$M_{11}$, $M_{12}$, $M_{22}$ and $M_{24}$. Thompson 
proved that the Monster group, a simple group of order
\[
808017424794512875886459904961710757005754368000000000,
\]
can be realized as a Galois group over the rationals. 

Despite significant progress, the Inverse Galois Problem 
remains unsolved for many groups, making it a central topic in algebra and number theory.
It is still unknown, for example, 
for the Mathieu group $M_{23}$, and the same happens for most of the 
simple groups of Lie type. 

For an elementary introduction to the Inverse Galois Problem, see \cite{MR1405612}. For more advanced presentations, we refer to \cite{MR3822366} and \cite{MR2363329}. 



In this short note, we 
consider the case of the Rubik's Cube group $\Rub$. More precisely, we will prove 
that the $\Rub$ is realizable as a Galois group 
over the rational numbers.



\begin{theorem}
\label{thm:main}
    Let 
    \[
    g(X) = X^{24} + \frac{3852443469645611961262219752967766016}{384257037754753807138505851908147025}
    (X^2 + 1)
    \]
    and  
    \begin{align*}
   f(X) = X^{24} &- 24X^{22} + 8X^{21} + 252X^{20} - 168X^{19} - 1484X^{18} - 627X^{17}\\
   &+26628X^{16}
   - 97918X^{15} + 199671X^{14} - 266679X^{13} 
   + 234997X^{12}\\
   &-114681X^{11} 
    - 10107X^{10} + 63686X^9
    - 45384X^8 + 6819X^7\\
    &+ 12880X^6 -
    12096X^5 + 5502X^4 
    - 1504X^3 + 252X^2 - 24X + 1.
    \end{align*}
    Then $f(X)g(X)$ has Galois group over the rationals isomorphic to the Rubik's Cube
    group $\Rub$.
\end{theorem}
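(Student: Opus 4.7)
The plan is to recognise $\Rub$ as a fiber product of the two natural transitive degree-$24$ permutation groups arising from its action on the $24$ corner facets and the $24$ edge facets, and to identify $f$ and $g$ as polynomials realising these two pieces separately. Concretely, the image $C$ of $\Rub$ on the corner facets is $(\Cyc{3})^7 \rtimes \S_8$ and the image $E$ on the edge facets is $(\Cyc{2})^{11} \rtimes \S_{12}$; each carries a canonical ``sign'' character to $\Cyc{2}$ inherited from $\S_8$ (resp.\ $\S_{12}$), and the equal-parity constraint identifies $\Rub$ with the fiber product $C \times_{\Cyc{2}} E$. The plan is thus to show $\Galf{f}{\Q} \cong C$, $\Galf{g}{\Q} \cong E$, and that the splitting fields $K_f, K_g$ share exactly a common quadratic subfield corresponding to these sign characters; a Goursat-type argument then yields $\Galf{fg}{\Q} \cong \Rub$.

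For $g$, the substitution $Y = X^2$ rewrites $g(X) = h(X^2)$ with $h(Y) = Y^{12} + cY + c$, a trinomial. I would first prove $\Galf{h}{\Q} \cong \S_{12}$ using an Osada/Nart--Vila-type criterion for trinomials, supplemented by factorisation patterns of $h$ modulo small primes. The roots of $g$ are $\pm\sqrt{\beta_i}$, where $\beta_i$ are the roots of $h$, and Vieta gives $\prod_i \beta_i = c$, whence $\prod_i \sqrt{\beta_i} = \pm\sqrt{c}$. The key design feature is that $c$ is chosen so that $\sqrt{c}$ already lies in the splitting field $K_h$: since $\disc{h} = c^{11}(12^{12} - 11^{11}c)$, the unique quadratic subfield of $K_h$ is generated by $\sqrt{c(12^{12} - 11^{11}c)}$, and one verifies that the explicit $c$ makes $12^{12} - 11^{11}c$ a nonzero rational square. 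Granting this, the only $\Q$-linear relation among the $\sqrt{\beta_i}$ is the product relation, which cuts $\Cyc{2} \wr \S_{12}$ down to its index-$2$ subgroup $E$.

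For $f$, the target group is $C = (\Cyc{3})^7 \rtimes \S_8$. The natural structure to exhibit is a degree-$8$ resolvent $F_0(Y) \in \Q[Y]$ with $\Galf{F_0}{\Q} \cong \S_8$ and roots $\alpha_1, \dots, \alpha_8$, together with a factorisation $f(X) = \prod_{i=1}^{8} q_i(X)$ over $\Q(\alpha_1, \dots, \alpha_8)$ in which each $q_i \in \Q(\alpha_i)[X]$ is an irreducible cubic with square discriminant, hence $\Cyc{3}$-cyclic over $\Q(\alpha_i)$; the drop from the ambient $(\Cyc{3})^8$ to $(\Cyc{3})^7$ then comes from a single global relation, for instance that a suitable product of one distinguished root from each $q_i$ is rational. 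In practice, certifying $\Galf{f}{\Q} \cong C$ is the computational heart of the proof: one matches the factorisation pattern of $f$ modulo many primes to the cycle types of $C$ acting on $24$ points, and uses Stauduhar-style resolvent descents to eliminate every transitive overgroup of $C$ inside $\S_{24}$.

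Finally, to assemble the pieces, I would compute $K_f \cap K_g$. Both $K_f$ and $K_g$ have unique quadratic subfields---namely $\Q(\sqrt{\disc{f}})$ and $\Q(\sqrt{\disc{g}})$, corresponding to the two sign characters---and the crucial last check is that these coincide (once again dictated by the explicit value of $c$). Granting this, $\Galf{fg}{\Q}$ is the fiber product $C \times_{\Cyc{2}} E$, which is precisely $\Rub$. The principal obstacle in this plan is the $f$-step: certifying that a degree-$24$ polynomial with the given opaque coefficients has as its Galois group the non-obvious index-$3$ subgroup $(\Cyc{3})^7 \rtimes \S_8$ of $\Cyc{3} \wr \S_8$ requires either an intrinsic construction of $f$ from an $\S_8$-polynomial equipped with a coherent family of cyclic cubics, or a careful, automated resolvent-based verification.
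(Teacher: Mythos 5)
Your overall architecture---realize $\Rub$ as the fiber product of the corner group $(\Cyc{3}\wr\S_8)^\circ$ and the edge group $(\Cyc{2}\wr\S_{12})^\circ$ over the sign character, realize the two factors by $f$ and $g$, and glue along a common quadratic subfield---is the same as the paper's. But your key design claim for $g$ is wrong, and that step fails for the explicit polynomial. Write $g(X)=h(X^2)$ with $h(Y)=Y^{12}+c(Y+1)$. To force $\Galf{g}{\Q}\subseteq(\Cyc{2}\wr\S_{12})^\circ$ you need the product $\prod_i\sqrt{\beta_i}=\pm\sqrt{c}$ to be fixed by the \emph{whole} Galois group, i.e.\ $c\in(\Q^\times)^2$; and indeed the theorem's $c$ is exactly $r^2$ with $r=1962764241992810496/619884697145165705$. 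Your weaker condition ``$\sqrt{c}\in K_h$'', enforced by asking $12^{12}-11^{11}c$ to be a rational square, is both false here and insufficient. It is false because $c$ is a square, so $\disc{h}=c^{11}(12^{12}-11^{11}c)\equiv 12^{12}-11^{11}c \bmod (\Q^\times)^2$; if that were a square, $\Galf{h}{\Q}$ would lie in $\A_{12}$, contradicting your own first step. (In fact $\disc{h}$ is arranged to be $\equiv\disc{f_8}=v^2-11w^2$, a nonsquare: that is what makes the sign-fibering work.) It is insufficient because if $\sqrt{c}$ merely generated the quadratic subfield $\Q(\sqrt{\disc{h}})$ of $K_h$ instead of lying in $\Q$, the constraint you obtain is $(-1)^{\sum y_i}=\sgn\sigma$, the ``diagonal'' index-$2$ subgroup of $\Cyc{2}\wr\S_{12}$, not the sum-zero edge group (already in $\Cyc{2}\wr\S_2$ these two subgroups are $\Cyc{4}$ versus $\Cyc{2}\times\Cyc{2}$)---precisely the ``deceivingly similar group'' trap discussed in the appendix. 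The two constraints you merged must be kept separate, as the paper does: $c=r^2$ gives the sum-zero condition, while the coincidence of quadratic subfields is the independent requirement $\disc{f_8}\cdot\disc{g_{12}}\in(\Q^\times)^2$, met by choosing $f_8$ with $\disc{f_8}=v^2-11w^2$ and $r=12^6w/(11^5v)$. (A small slip in the same direction: once the group lies in the sum-zero subgroup it lies in $\A_{24}$, so $\disc{g}$ is a square; the quadratic subfield of $K_g$ is $\Q(\sqrt{\disc{h}})$, not $\Q(\sqrt{\disc{g}})$.)

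On the $f$ side you only describe the expected shape of the group and defer the certification to resolvent computations, which is the part the paper actually supplies: $f(X)=(X^2-X)^8 f_8\bigl((X^3-3X+1)/(X^2-X)\bigr)$ with $f_8(X)=X^8-2139X+6489$ of Galois group $\S_8$, so that each root $\alpha$ of $f_8$ contributes the cyclic cubic $X^3-\alpha X^2+(\alpha-3)X+1$ (the ``simplest cubic'' family, which avoids needing $\omega$ in the splitting field), and the index-$3$ condition is imposed after a M\"obius change of variables over $\Q[\omega]$ that turns $f$ into a monic polynomial in $X^3$ whose constant term is made a perfect cube in $\Q[\omega]$, by solving $3\omega t-s+6561\overline{\omega}=c(a+\omega b)^3$ for the parameters of $X^8-tX-s$. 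Your proposed mechanism for the index-$3$ drop---a rational product of ``one distinguished root from each cubic''---does not make sense over $\Q$, since the three roots of each cubic are conjugate over $\Q(\alpha)$; the correct cube-class condition lives over $\Q[\omega]$. Finally, your Goursat assembly is fine in principle, but the assertion that $K_f\cap K_g$ is exactly the common quadratic field is the maximality that still has to be certified; the paper does this (and only this) by a machine computation showing $|\Galf{fg}{\Q}|=43252003274489856000$, and your plan would need an equivalent check.
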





The statement of Theorem \ref{thm:main} can be easily verified using the computational algebra system Magma \cite{zbMATH01077111}; here we use Magma V2.28-18.
The calculation takes only a few minutes on a standard desktop computer.



A \emph{parametric family} of Galois extensions is a family of field extensions depending on some parameters $t_1,\dots,t_n$ such that the Galois group of this extension over $\Q(t_1,\dots,t_n)$ is some given group $G$. Hilbert's irreducibility theorem implies that, for infinitely many values of $t_1,\dots,t_n$, the specialization has Galois group $G$ as well. Thus, a parametric family provides a way to construct numerous specific extensions of $\Q$ with Galois group $G$ by specializing the parameter $t$.

\begin{theorem}
    \label{thm:parametric}
There is a parametric family of 
$\Rub-$extensions over $\Q.$
\end{theorem}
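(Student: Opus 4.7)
The strategy is to embed the extension of Theorem~\ref{thm:main} into a one-parameter family by deforming the coefficient of $X^2+1$ in $g$ along a suitable rational curve, and then invoking Hilbert's irreducibility theorem. Concretely, keep $f$ as in Theorem~\ref{thm:main} and consider
\[
g_c(X) = X^{24} + c\,(X^2 + 1),
\]
with $c$ a rational parameter. The goal is to pin down a rational curve in the $c$-line along which the generic Galois group of $f(X)g_c(X)$ over the corresponding function field equals $\Rub$.

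To study $g_c$, substitute $Y = X^2$ to obtain $h_c(Y) = Y^{12} + cY + c$. The genus-zero cover $Y \mapsto -Y^{12}/(Y+1) = c$ has ramification data a $12$-cycle at $Y = 0$, an $11$-cycle fixing one point at $Y = \infty$, and a transposition at the remaining critical point; by Jordan's theorem (a primitive permutation group containing a transposition is the full symmetric group, and a transitive group containing an $11$-cycle with one fixed point is automatically primitive), $\Gal(h_c/\Q(c)) \cong \S_{12}$. Adjoining the square roots of the twelve roots $y_i$ of $h_c$ yields $\Gal(g_c/\Q(c)) \subseteq (\Z/2)^{12} \rtimes \S_{12}$. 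Substituting $c = u^2$ places $\sqrt{c} = u$ in the base field; since $\prod y_i = c = u^2$ now becomes a square in the splitting field of $h_{u^2}$, the $\F_2$-span of the classes of the $y_i$ modulo squares drops from dimension $12$ to $11$, so $\Gal(g_{u^2}/\Q(u))$ is identified with the edge group $E = (\Z/2)^{11} \rtimes \S_{12}$. Because the proof of Theorem~\ref{thm:main} identifies $\Gal(f/\Q)$ with the corner group $C = (\Z/3)^{7} \rtimes \S_8$, the composite Galois group lies in $C \times E$ and coincides with the fibered product $\Rub = C \times_{\Z/2} E$ precisely when the two sign-$\Z/2$-subextensions agree. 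Since $\disc{h_c} = c^{11}(12^{12}-11^{11}c)$, this compatibility reduces to the algebraic relation
\[
\disc{f} \cdot (12^{12} - 11^{11} u^2) = w^2.
\]

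This equation defines a conic in $(u, w)$; the value of $c$ from Theorem~\ref{thm:main} supplies a rational point on it, so the conic is $\Q$-rational and admits a rational parametrization $(u(s), w(s))$ by a single parameter $s$. Setting $c(s) = u(s)^2 \in \Q(s)$, the Galois group of $f(X)g_{c(s)}(X)$ over $\Q(s)$ is contained in $\Rub$ by construction and contains $\Rub$ by specialization to the value recovering Theorem~\ref{thm:main}; hence it equals $\Rub$. Hilbert's irreducibility theorem then produces infinitely many $s \in \Q$ for which the specialized polynomial has Galois group $\Rub$ over $\Q$, which is the desired parametric family. The main obstacle is verifying that, generically along the curve, no multiplicative relation among the $y_i$ beyond $\prod y_i = u^2$ holds in the splitting field of $h_{u^2}$, so that the edge group is realized in full rather than as a proper quotient; this is expected from the generic behaviour of an $\S_{12}$-polynomial but requires a careful analysis of the $\F_2[\S_{12}]$-module generated by the classes of the roots modulo squares.
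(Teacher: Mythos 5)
Your proposal is correct in outline, but it reaches Theorem \ref{thm:parametric} by a genuinely different route than the paper. You freeze the corner factor at the explicit $f$ of Theorem \ref{thm:main} and vary only the edge polynomial, setting $c=u^2$ and parametrizing the conic $w^{2}=\disc{f_8}\,\bigl(12^{12}-11^{11}u^{2}\bigr)$, which is rational because the value of $r$ from Theorem \ref{thm:main} supplies a rational point; the containment $\Galf{f\,g_{c(s)}}{\Q(s)}\subseteq\Rub$ then follows from $c(s)$ being a square in $\Q(s)$ (zero-sum condition on the edge factor, as in the paper's treatment of $g_{24}$) together with the conic relation (agreement of the two sign characters), and the reverse inclusion follows from Proposition \ref{special} applied at the parameter value recovering Theorem \ref{thm:main}. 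The paper instead lets both factors move: it builds a two-parameter family whose generic group is the larger group $\widehat{\Rub}=(\Cyc{3}\wr\S_8)\times_{\sgn}(\Cyc{2}\wr\S_{12})^{\circ}$ --- the zero-sum condition on the corner factor is \emph{not} imposed parametrically --- and then obtains a $\Rub$-extension of $\Q(u,v)$ as the fixed field of the split central $\Cyc{3}$, taking for $h$ the minimal polynomial of a primitive element of that fixed field. Your construction buys an explicit polynomial whose splitting field over $\Q(s)$ has group literally $\Rub$, at the price of a family that is constant in the corner part (every member contains one and the same field of degree $3^{7}\cdot 8!$, which is within the paper's stated definition of a parametric family but thinner in spirit) and whose lower bound leans on the Magma verification of Theorem \ref{thm:main}; the paper's family varies both constituents and is independent of those particular coefficients, at the cost of realizing $\Rub$ only through the fixed-field construction rather than as the Galois group of an explicit product. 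One correction: the ``main obstacle'' you flag at the end is not an obstacle --- once you have the containment in $\Rub$ and the specialization embedding of Proposition \ref{special}, equality is forced, so no analysis of the $\F_2[\S_{12}]$-module of root classes (nor, in fact, the monodromy computation of $\Gal(h_c/\Q(c))$) is needed; the only genuine loose ends are routine, namely that the Theorem \ref{thm:main} point occurs at a rational value of the conic parameter (it does, e.g.\ via the tangent line there) and that this specialization meets the separability hypothesis of Proposition \ref{special}; note also that $\disc{f}\equiv\disc{f_8}\bmod(\Q^{\times})^{2}$, since the sign of the degree-$24$ action of $(\Cyc{3}\wr\S_8)^{\circ}$ agrees with the sign of the underlying $\S_8$-permutation, so phrasing the conic with $\disc{f}$ is legitimate.
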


Theorem \ref{thm:parametric} will be proved in Section \ref{section:parametric}. 

Before going into the proof of our main theorems, it is convenient to give a concrete presentation of $\Rub$. We 
present a permutation representation of $\Rub$ 
by numbering the squares as follows: 



\TikZRubikFaceLeft
        {O}{O}{O}
        {O}{O}{O}
        {O}{O}{O}        
\TikZRubikFaceRight
        {B}{B}{B}
        {B}{B}{B}
        {B}{B}{B}
\TikZRubikFaceTop
        {Y}{Y}{Y}
        {Y}{Y}{Y}
        {Y}{Y}{Y}
\BuildArray

\begin{minipage}[b]{0.4\textwidth}
\centering
\begin{tikzpicture}
 \clip (-3,-2.5) rectangle (3,2.5);
 \begin{scope}[tdplot_main_coords]
  \filldraw [canvas is yz plane at x=1.5] (-1.5,-1.5) rectangle (1.5,1.5);
  \filldraw [canvas is xz plane at y=1.5] (-1.5,-1.5) rectangle (1.5,1.5);
  \filldraw [canvas is yx plane at z=1.5] (-1.5,-1.5) rectangle (1.5,1.5);
  \foreach \X [count=\XX starting from 0] in {-1.5,-0.5,0.5}{
   \foreach \Y [count=\YY starting from 0] in {-1.5,-0.5,0.5}{
   \pgfmathtruncatemacro{\Z}{\XX+3*(2-\YY)}
   \pgfmathsetmacro{\mycolor}{\myarray[\Z]}
    \draw [thick,canvas is yz plane at x=1.5,shift={(\X,\Y)},fill=\mycolor] (0.5,0) -- ({1-\radius},0) arc (-90:0:\radius) -- (1,{1-\radius}) arc (0:90:\radius) -- (\radius,1) arc (90:180:\radius) -- (0,\radius) arc (180:270:\radius) -- cycle;

    \node[canvas is yz plane at x=1.5,shift={(-1,1)}]{9};
    \node[canvas is yz plane at x=1.5,shift={(0,1)}]{10};
    \node[canvas is yz plane at x=1.5,shift={(1,1)}]{11};
    \node[canvas is yz plane at x=1.5,shift={(-1,0)}]{12};
    \node[canvas is yz plane at x=1.5,shift={(0,0)}]{L};
    \node[canvas is yz plane at x=1.5,shift={(1,0)}]{13};
    \node[canvas is yz plane at x=1.5,shift={(-1,-1)}]{14};
    \node[canvas is yz plane at x=1.5,shift={(0,-1)}]{15};
    \node[canvas is yz plane at x=1.5,shift={(1,-1)}]{16};
    
    \pgfmathtruncatemacro{\Z}{2-\XX+3*(2-\YY)+9}
    \pgfmathsetmacro{\mycolor}{\myarray[\Z]}
    \draw [thick,canvas is xz plane at y=1.5,shift={(\X,\Y)},fill=\mycolor] (0.5,0) -- ({1-\radius},0) arc (-90:0:\radius) -- (1,{1-\radius}) arc (0:90:\radius) -- (\radius,1) arc (90:180:\radius) -- (0,\radius) arc (180:270:\radius) -- cycle;
    \node[canvas is xz plane at y=1.5,shift={(-1,1)},xscale=-1] {19};
    \node[canvas is xz plane at y=1.5,shift={(0,1)},xscale=-1] {18};
    \node[canvas is xz plane at y=1.5,shift={(1,1)},xscale=-1] {17};
    \node[canvas is xz plane at y=1.5,shift={(-1,0)},xscale=-1] {21};
    \node[canvas is xz plane at y=1.5,shift={(0,0)},xscale=-1] {F};
    \node[canvas is xz plane at y=1.5,shift={(1,0)},xscale=-1] {20};
    \node[canvas is xz plane at y=1.5,shift={(-1,-1)},xscale=-1] {24};
    \node[canvas is xz plane at y=1.5,shift={(0,-1)},xscale=-1] {23};
    \node[canvas is xz plane at y=1.5,shift={(1,-1)},xscale=-1] {22};

    \pgfmathtruncatemacro{\Z}{2-\YY+3*\XX+18}
    \pgfmathsetmacro{\mycolor}{\myarray[\Z]}
    \draw [thick,canvas is yx plane at z=1.5,shift={(\X,\Y)},fill=\mycolor] (0.5,0) -- ({1-\radius},0) arc (-90:0:\radius) -- (1,{1-\radius}) arc (0:90:\radius) -- (\radius,1) arc (90:180:\radius) -- (0,\radius) arc (180:270:\radius) -- cycle;

    \node[canvas is yx plane at z=1.5,shift={(-1,1)},xscale=-1,rotate=-90] {1};
    \node[canvas is yx plane at z=1.5,shift={(-1,0)},xscale=-1,rotate=-90] {2};
    \node[canvas is yx plane at z=1.5,shift={(-1,-1)},xscale=-1,rotate=-90] {3};
    \node[canvas is yx plane at z=1.5,shift={(0,1)},xscale=-1,rotate=-90] {4};
    \node[canvas is yx plane at z=1.5,shift={(0,0)},xscale=-1,rotate=-90] {U};
    \node[canvas is yx plane at z=1.5,shift={(0,-1)},xscale=-1,rotate=-90] {5};
    \node[canvas is yx plane at z=1.5,shift={(1,1)},xscale=-1,rotate=-90] {6};
    \node[canvas is yx plane at z=1.5,shift={(1,0)},xscale=-1,rotate=-90] {7};
    \node[canvas is yx plane at z=1.5,shift={(1,-1)},xscale=-1,rotate=-90] {8};    
    }
   }
 \end{scope}
\end{tikzpicture}
\end{minipage} 
\TikZRubikFaceRight
        {R}{R}{R}
        {R}{R}{R}
        {R}{R}{R}        
\TikZRubikFaceTop
        {G}{G}{G}
        {G}{G}{G}
        {G}{G}{G}
\TikZRubikFaceLeft
        {W}{W}{W}
        {W}{W}{W}
        {W}{W}{W}
\BuildArray
\begin{minipage}[b]{0.4\textwidth}
\centering
\begin{tikzpicture}[rotate=180,transform shape]
 \clip (-3,-2.5) rectangle (3,2.5);
 \begin{scope}[tdplot_main_coords]
  \filldraw [canvas is yz plane at x=1.5] (-1.5,-1.5) rectangle (1.5,1.5);
  \filldraw [canvas is xz plane at y=1.5] (-1.5,-1.5) rectangle (1.5,1.5);
  \filldraw [canvas is yx plane at z=1.5] (-1.5,-1.5) rectangle (1.5,1.5);
  \foreach \X [count=\XX starting from 0] in {-1.5,-0.5,0.5}{
   \foreach \Y [count=\YY starting from 0] in {-1.5,-0.5,0.5}{
   \pgfmathtruncatemacro{\Z}{\XX+3*(2-\YY)}
   \pgfmathsetmacro{\mycolor}{\myarray[\Z]}
    \draw [thick,canvas is yz plane at x=1.5,shift={(\X,\Y)},fill=\mycolor] (0.5,0) -- ({1-\radius},0) arc (-90:0:\radius) -- (1,{1-\radius}) arc (0:90:\radius) -- (\radius,1) arc (90:180:\radius) -- (0,\radius) arc (180:270:\radius) -- cycle;

    \node[canvas is yz plane at x=1.5,shift={(-1,1)},rotate=180]{40};
    \node[canvas is yz plane at x=1.5,shift={(0,1)},rotate=180]{39};
    \node[canvas is yz plane at x=1.5,shift={(1,1)},rotate=180]{38};
    \node[canvas is yz plane at x=1.5,shift={(-1,0)},rotate=180]{37};
    \node[canvas is yz plane at x=1.5,shift={(0,0)},rotate=180]{B};
    \node[canvas is yz plane at x=1.5,shift={(1,0)},rotate=180]{36};
    \node[canvas is yz plane at x=1.5,shift={(-1,-1)},rotate=180]{35};
    \node[canvas is yz plane at x=1.5,shift={(0,-1)},rotate=180]{34};
    \node[canvas is yz plane at x=1.5,shift={(1,-1)},rotate=180]{33};
    
    \pgfmathtruncatemacro{\Z}{2-\XX+3*(2-\YY)+9}
    \pgfmathsetmacro{\mycolor}{\myarray[\Z]}
    \draw [thick,canvas is xz plane at y=1.5,shift={(\X,\Y)},fill=\mycolor] (0.5,0) -- ({1-\radius},0) arc (-90:0:\radius) -- (1,{1-\radius}) arc (0:90:\radius) -- (\radius,1) arc (90:180:\radius) -- (0,\radius) arc (180:270:\radius) -- cycle;
    \node[canvas is xz plane at y=1.5,shift={(-1,1)},xscale=-1,rotate=180] {30};
    \node[canvas is xz plane at y=1.5,shift={(0,1)},xscale=-1,rotate=180] {31};
    \node[canvas is xz plane at y=1.5,shift={(1,1)},xscale=-1,rotate=180] {32};
    \node[canvas is xz plane at y=1.5,shift={(-1,0)},xscale=-1,rotate=180] {28};
    \node[canvas is xz plane at y=1.5,shift={(0,0)},xscale=-1,rotate=180] {R};
    \node[canvas is xz plane at y=1.5,shift={(1,0)},xscale=-1,rotate=180] {29};
    \node[canvas is xz plane at y=1.5,shift={(-1,-1)},xscale=-1,rotate=180] {25};
    \node[canvas is xz plane at y=1.5,shift={(0,-1)},xscale=-1,rotate=180] {26};
    \node[canvas is xz plane at y=1.5,shift={(1,-1)},xscale=-1,rotate=180] {27};

    \pgfmathtruncatemacro{\Z}{2-\YY+3*\XX+18}
    \pgfmathsetmacro{\mycolor}{\myarray[\Z]}
    \draw [thick,canvas is yx plane at z=1.5,shift={(\X,\Y)},fill=\mycolor] (0.5,0) -- ({1-\radius},0) arc (-90:0:\radius) -- (1,{1-\radius}) arc (0:90:\radius) -- (\radius,1) arc (90:180:\radius) -- (0,\radius) arc (180:270:\radius) -- cycle;

    \node[canvas is yx plane at z=1.5,shift={(-1,1)},xscale=-1,rotate=180] {46};
    \node[canvas is yx plane at z=1.5,shift={(-1,0)},xscale=-1,rotate=180] {44};
    \node[canvas is yx plane at z=1.5,shift={(-1,-1)},xscale=-1,rotate=180] {41};
    \node[canvas is yx plane at z=1.5,shift={(0,1)},xscale=-1,rotate=180] {47};
    \node[canvas is yx plane at z=1.5,shift={(0,0)},xscale=-1,rotate=180] {D};
    \node[canvas is yx plane at z=1.5,shift={(0,-1)},xscale=-1,rotate=180] {42};
    \node[canvas is yx plane at z=1.5,shift={(1,1)},xscale=-1,rotate=180] {48};
    \node[canvas is yx plane at z=1.5,shift={(1,0)},xscale=-1,rotate=180] {45};
    \node[canvas is yx plane at z=1.5,shift={(1,-1)},xscale=-1,rotate=180] {43};    
    }
   }
 \end{scope}
\end{tikzpicture}
\end{minipage}

The cube consists of 26 pieces. There are eight corner pieces with three different colored facets, twelve edge pieces, with two facets and two colors each, and six middle pieces, each having just one color. Taking a closer look at Rubik’s cube, one sees that we are using the following labelling for the unfolded cube:

\begin{center}
\RubikCubeSolved
\RubikFaceFront{B}{B}{B}{B}{B}{B}{B}{B}{B}
\RubikFaceLeft{O}{O}{O}{O}{O}{O}{O}{O}{O}
\RubikFaceBack{W}{W}{W}{W}{W}{W}{W}{W}{W}
\RubikFaceRight{R}{R}{R}{R}{R}{R}{R}{R}{R}
\RubikFaceDown{G}{G}{G}{G}{G}{G}{G}{G}{G}
\RubikFaceUp{Y}{Y}{Y}{Y}{Y}{Y}{Y}{Y}{Y}
\ShowCube{8cm}{0.7}{%
\DrawRubikCubeF
\node (1) at (0.5, 5.5)[black]{\small\textsf{1}};
\node (2) at (1.5, 5.5)[black]{\small\textsf{2}};
\node (3) at (2.5, 5.5)[black]{\small\textsf{3}};
\node (4) at (0.5, 4.5)[black]{\small\textsf{4}};
\node (U) at (1.5, 4.5) [black]{\small\textsf{U}};
\node (5) at (2.5, 4.5)[black]{\small\textsf{5}};
\node (6) at (0.5, 3.5)[black]{\small\textsf{6}};
\node (7) at (1.5, 3.5)[black]{\small\textsf{7}};
\node (8) at (2.5, 3.5)[black]{\small\textsf{8}};

\node (9) at (-2.5, 2.5)[black]{\small\textsf{9}};
\node (10) at (-1.5, 2.5)[black]{\small\textsf{10}};
\node (11) at (-0.5, 2.5)[black]{\small\textsf{11}};
\node (12) at (-2.5, 1.5)[black]{\small\textsf{12}};
\node (L) at (-1.5, 1.5)[black]{\small\textsf{L}};
\node (13) at (-0.5, 1.5)[black]{\small\textsf{13}};
\node (14) at (-2.5, 0.5)[black]{\small\textsf{14}};
\node (15) at (-1.5, 0.5)[black]{\small\textsf{15}};
\node (16) at (-0.5, 0.5)[black]{\small\textsf{16}};

\node (19) at (2.5, 2.5)[black]{\small\textsf{19}};
\node (18) at (1.5, 2.5)[black]{\small\textsf{18}};
\node (17) at (0.5, 2.5)[black]{\small\textsf{17}};
\node (20) at (0.5, 1.5)[black]{\small\textsf{20}};
\node (F) at (1.5, 1.5)[black]{\small\textsf{F}};
\node (21) at (2.5, 1.5)[black]{\small\textsf{21}};
\node (22) at (0.5, 0.5)[black]{\small\textsf{22}};
\node (23) at (1.5, 0.5)[black]{\small\textsf{23}};
\node (24) at (2.5, 0.5)[black]{\small\textsf{24}};

\node (41) at (0.5, -0.5)[black]{\small\textsf{41}};
\node (42) at (1.5, -0.5)[black]{\small\textsf{42}};
\node (43) at (2.5, -0.5)[black]{\small\textsf{43}};
\node (44) at (0.5, -1.5)[black]{\small\textsf{44}};
\node (D) at (1.5, -1.5)[black]{\small\textsf{D}};
\node (45) at (2.5, -1.5)[black]{\small\textsf{45}};
\node (46) at (0.5, -2.5)[black]{\small\textsf{46}};
\node (47) at (1.5, -2.5)[black]{\small\textsf{47}};
\node (48) at (2.5, -2.5)[black]{\small\textsf{48}};

\node (25) at (3.5, 2.5)[black]{\small\textsf{25}};
\node (26) at (4.5, 2.5)[black]{\small\textsf{26}};
\node (27) at (5.5, 2.5)[black]{\small\textsf{27}};
\node (28) at (3.5, 1.5)[black]{\small\textsf{28}};
\node (R) at (4.5, 1.5)[black]{\small\textsf{R}};
\node (29) at (5.5, 1.5)[black]{\small\textsf{29}};
\node (30) at (3.5, 0.5)[black]{\small\textsf{30}};
\node (31) at (4.5, 0.5)[black]{\small\textsf{31}};
\node (32) at (5.5, 0.5)[black]{\small\textsf{32}};

\node (33) at (6.5, 2.5)[black]{\small\textsf{33}};
\node (34) at (7.5, 2.5)[black]{\small\textsf{34}};
\node (35) at (8.5, 2.5)[black]{\small\textsf{35}};
\node (36) at (6.5, 1.5)[black]{\small\textsf{36}};
\node (B) at (7.5, 1.5)[black]{\small\textsf{B}};
\node (37) at (8.5, 1.5)[black]{\small\textsf{37}};
\node (38) at (6.5, 0.5)[black]{\small\textsf{38}};
\node (39) at (7.5, 0.5)[black]{\small\textsf{39}};
\node (40) at (8.5, 0.5)[black]{\small\textsf{40}};
}
\end{center}


There are six basic movements one can apply to the Rubik's Cube,
all of which leave each central square invariant. Let us call them $T_1,\dots, T_6$. Then 
\begin{equation}
\begin{aligned}
\label{eq:Ts}
    T_1 &= \cycle{ 1, 3, 8, 6}\cycle{ 2, 5, 7, 4}\cycle{9,33,25,17}\cycle{10,34,26,18}\cycle{11,35,27,19},\\
    T_2 &= \cycle{ 9,11,16,14}\cycle{10,13,15,12}\cycle{ 1,17,41,40}\cycle{ 4,20,44,37}\cycle{ 6,22,46,35},\\
    T_3 &= \cycle{17,19,24,22}\cycle{18,21,23,20}\cycle{ 6,25,43,16}\cycle{ 7,28,42,13}\cycle{ 8,30,41,11},\\
    T_4 &= \cycle{25,27,32,30}\cycle{26,29,31,28}\cycle{ 3,38,43,19}\cycle{ 5,36,45,21}\cycle{ 8,33,48,24},\\
    T_5 &= \cycle{33,35,40,38}\cycle{34,37,39,36}\cycle{ 3, 9,46,32}\cycle{ 2,12,47,29}\cycle{ 1,14,48,27},\\
    T_6 &= \cycle{41,43,48,46}\cycle{42,45,47,44}\cycle{14,22,30,38}\cycle{15,23,31,39}\cycle{16,24,32,40},
\end{aligned}
\end{equation}
where we understand, for example, that the symbol $(123)$ corresponds to the
permutation $1\mapsto 2$, $2\mapsto 3$ and $3\mapsto 1$. 
The transformations~\eqref{eq:Ts}
generate a permutation representation of the Rubik's Cube group $\Rub$. Moreover, 
any position of the cube can be described by a word in the permutations $T_1,\dots,T_6$.

One can easily check with \cite{GAP4} or \cite{zbMATH01077111} that 
$\Rub$ can be generated by 
two permutations, 
\[
\alpha=T_2^2T_5T_4T_6^{-1}T_2^{-1},
\quad 
\beta=T_1T_2T_4T_1T_4^{-1}T_1^{-1}T_2^{-1}.
\]
The order of the group $\Rub$ is 
\[
43252003274489856000=2^{27}3^{14}5^37^211.
\]

To prove Theorem \ref{thm:main}, instead
of using the group $\langle T_1,\dots,T_6\rangle$ 
generated by the permutations $T_1,\dots,T_6$, 
we will use another description 
of this group that uses
wreath products; see Subsection \ref{subsection:Rubik}.

\bigskip 
One may ask where 
the motivation for 
Theorems \ref{thm:main} and \ref{thm:parametric} 
come from.
One afternoon, as usual, we were discussing math over a cup of coffee, specifically good exercises for a Galois theory course. During which time, Dino, the second author's ten-year-old son, was playing with a Rubik's Cube. At some point, the cube became the main topic of our chat, and one of us mentioned an old paper by Zassenhaus~\cite{Zassen82} about the \emph{Rubik's Cube as a tool in Galois theory}. Zassenhaus' beautiful paper, however, does not address the Inverse Galois Problem; instead, it focuses on the Rubik's Cube group as a tool for exploring fundamental concepts in elementary group theory. We went back to Zassenhaus' paper, and we naturally asked ourselves whether (and how) the Rubik's Cube group can be realized as a Galois group over the rational numbers, and hence this project was born.

\section{Preliminaries}

\subsection{Wreath products}

We briefly review wreath products of finite groups. For a non-empty finite set $S$ and a group $A$, let
$A^S$ be the set of maps $S\to A$. 
A direct calculation shows that 
$A^S$ with 
\[
(\alpha\beta)(s)=\alpha(s)\beta(s),\quad \alpha,\beta\in A^S,\;s\in S,
\]
is a group. 


Let $G$ be a group acting  
on the right on $S$, so there is a map 
\[
S\times G\to G,\quad 
(s,x)\mapsto s\cdot x,
\]
such that 
$s\cdot 1=s$ for all $s\in S$ and 
$s\cdot (xy)=(s\cdot x)\cdot y$ for all $s\in S$ and $x,y\in G$. Then 
$G$ acts on $A^S$ by 
\begin{equation}
\label{eq:action}
(x\cdot \alpha)(s)=\alpha(s\cdot x),\quad \alpha\in A^S,\;x\in G,\;s\in S.    
\end{equation}

\begin{definition}
\label{def:wreath_product}
The \emph{wreath product} 
$A\wr_S G$ is defined 
as the semidirect product $A^S\rtimes G$, where the action
of $G$ on $A^S$ is that of \eqref{eq:action}. 
\end{definition}

The product of $A\wr_S G$ is given by
\[
(\alpha,x)(\beta,y)=(\alpha(x\cdot \beta),xy).
\]
We write $A\wr G$ when the $G$-action is clear from the context. 

A particular case that will be important for us 
is when $S=\{1,\dots,n\}$, $A$ is a finite group and $G=\S_n$ is the symmetric group
on $n$ letters. 

The power of wreath products goes far beyond the Rubik's Cube. Other remarkable group-theoretical applications can be found, for instance, in \cite{MR2674854}. 


    

For an integer $n\geq2$, we write $\Cyc{n}$
to denote the cyclic (additive) group of order~$n$. 
\begin{notation}
Let $n,m\geq2$ and $G$ be a subgroup of $\,\S_m$.  We 
write $(\Cyc{n}\wr G)^ \circ$ to denote 
the kernel of the group homomorphism
\begin{align*}
\Cyc{n}\wr G \to \Cyc{n}, \quad
(x, \sigma) \mapsto \sum_{i=1}^m x_i,
\end{align*}
where $x=(x_1,\dots,x_m)$. 
\end{notation}


\subsection{Rubik's Cube group}
\label{subsection:Rubik}

For our purposes, we will use a particular representation of 
the Rubik's Cube group based on wreath products. Let 
\begin{equation}
\label{eq:small_model}
\begin{aligned}
\Psi\colon\pare{\Cyc{3}\wr \S_8} \times \pare{\Cyc{2}\wr \S_{12}}
&\to  \Cyc{3} \times \Cyc{2} \times \set{\pm 1},
\\
(x,\rho,y,\sigma) &\mapsto \pare{\sum_{i=1}^8 x_i , \sum_{i=1}^{12} y_i, \sgn(\rho)\sgn(\sigma)},
\end{aligned}
\end{equation}
where $x=(x_1,\dots,x_8)$ and $y=(y_1,\dots,y_{12})$. 

Let $G$, $H$ and $K$ be groups, and $f\colon G\to K$ and $g\colon H\to K$ be group homomorphisms. Recall that the \emph{fiber product} $G\times_{f,g}H$ of the groups $G$ and $H$ which is the
subgroup
\[
\{(x,y)\in G\times H:f(x)=g(y)\}
\]
of $G\times H$.

The \emph{Rubik's Cube group} is 
    the 
    kernel $\Rub$ of the homomorphism $\Psi$ of~\eqref{eq:small_model}, 
    namely
    $$
    \pare{\Cyc{3}\wr \S_8}^\circ \times_{\sgn} 
    \pare{\Cyc{2}\wr \S_{12}}^\circ,
    $$
    where the symbol $\times_\sgn$ indicates a fiber product with respect to both maps
    $$\pare{\Cyc{n}\wr \S_m} \to \{\pm 1\},\quad
    (x,\sigma)\mapsto \sgn(\sigma),
    $$
    for $(n,m)\in\{(3,8),(2,12)\}$. 

We now briefly explain the group homomorphism of \eqref{eq:small_model}. For more details, we refer
to the book \cite[Section 2.5]{bandelow2012inside}. The Rubik's Cube has eight 
corners (i.e., \emph{corner cubie}), each of them with three different positions. 
Any valid permutation on the cube 
sends 
corner facets 
to corners facets. 
Thus the facets of a corner cube belong to the cyclic group $\Cyc{3}$ of three elements. Since 
there are eight corner cubes, 
the orientation of any facet of a corner cube can be described by 
the factor 
$\Cyc{3}\wr\S_8$
in the domain of the 
map $\Psi$ of \eqref{eq:small_model}.

 \TikZRubikFaceLeft
         {W}{W}{R}
         {W}{W}{W}
         {W}{W}{W}        
 \TikZRubikFaceRight
         {B}{W}{W}
         {W}{W}{W}
         {W}{W}{W}
 \TikZRubikFaceTop
         {W}{W}{W}
         {W}{W}{W}
         {G}{W}{W}
 \BuildArray
\begin{center}
\begin{tikzpicture}[scale=.5, transform shape]
 \clip (-3,-2.5) rectangle (3,2.5);
 \begin{scope}[tdplot_main_coords]
  \filldraw [canvas is yz plane at x=1.5] (-1.5,-1.5) rectangle (1.5,1.5);
  \filldraw [canvas is xz plane at y=1.5] (-1.5,-1.5) rectangle (1.5,1.5);
  \filldraw [canvas is yx plane at z=1.5] (-1.5,-1.5) rectangle (1.5,1.5);
  \foreach \X [count=\XX starting from 0] in {-1.5,-0.5,0.5}{
   \foreach \Y [count=\YY starting from 0] in {-1.5,-0.5,0.5}{
   \pgfmathtruncatemacro{\Z}{\XX+3*(2-\YY)}
   \pgfmathsetmacro{\mycolor}{\myarray[\Z]}
    \draw [thick,canvas is yz plane at x=1.5,shift={(\X,\Y)},fill=\mycolor] (0.5,0) -- ({1-\radius},0) arc (-90:0:\radius) -- (1,{1-\radius}) arc (0:90:\radius) -- (\radius,1) arc (90:180:\radius) -- (0,\radius) arc (180:270:\radius) -- cycle;
   
    \pgfmathtruncatemacro{\Z}{2-\XX+3*(2-\YY)+9}
    \pgfmathsetmacro{\mycolor}{\myarray[\Z]}
    \draw [thick,canvas is xz plane at y=1.5,shift={(\X,\Y)},fill=\mycolor] (0.5,0) -- ({1-\radius},0) arc (-90:0:\radius) -- (1,{1-\radius}) arc (0:90:\radius) -- (\radius,1) arc (90:180:\radius) -- (0,\radius) arc (180:270:\radius) -- cycle;

    \pgfmathtruncatemacro{\Z}{2-\YY+3*\XX+18}
    \pgfmathsetmacro{\mycolor}{\myarray[\Z]}
    \draw [thick,canvas is yx plane at z=1.5,shift={(\X,\Y)},fill=\mycolor] (0.5,0) -- ({1-\radius},0) arc (-90:0:\radius) -- (1,{1-\radius}) arc (0:90:\radius) -- (\radius,1) arc (90:180:\radius) -- (0,\radius) arc (180:270:\radius) -- cycle;

    }
   }
 \end{scope}
\end{tikzpicture}
\end{center}

The cube also
has twelve edge cubes (i.e., \emph{edge cubie}), each of them having two positions:
Since there are twelve edge cubes, any facet of an edge cube belongs to 
the factor 
$\Cyc{2}\wr\S_{12}$ 
in \eqref{eq:small_model}.

 \TikZRubikFaceLeft
         {W}{R}{W}
         {W}{W}{W}
         {W}{W}{W}        
 \TikZRubikFaceRight
         {W}{W}{W}
         {W}{W}{W}
         {W}{W}{W}
 \TikZRubikFaceTop
         {W}{W}{W}
         {G}{W}{W}
         {W}{W}{W}
 \BuildArray
 
\begin{center}
\begin{tikzpicture}[scale=.5, transform shape]
 \clip (-3,-2.5) rectangle (3,2.5);
 \begin{scope}[tdplot_main_coords]
  \filldraw [canvas is yz plane at x=1.5] (-1.5,-1.5) rectangle (1.5,1.5);
  \filldraw [canvas is xz plane at y=1.5] (-1.5,-1.5) rectangle (1.5,1.5);
  \filldraw [canvas is yx plane at z=1.5] (-1.5,-1.5) rectangle (1.5,1.5);
  \foreach \X [count=\XX starting from 0] in {-1.5,-0.5,0.5}{
   \foreach \Y [count=\YY starting from 0] in {-1.5,-0.5,0.5}{
   \pgfmathtruncatemacro{\Z}{\XX+3*(2-\YY)}
   \pgfmathsetmacro{\mycolor}{\myarray[\Z]}
    \draw [thick,canvas is yz plane at x=1.5,shift={(\X,\Y)},fill=\mycolor] (0.5,0) -- ({1-\radius},0) arc (-90:0:\radius) -- (1,{1-\radius}) arc (0:90:\radius) -- (\radius,1) arc (90:180:\radius) -- (0,\radius) arc (180:270:\radius) -- cycle;
   
    \pgfmathtruncatemacro{\Z}{2-\XX+3*(2-\YY)+9}
    \pgfmathsetmacro{\mycolor}{\myarray[\Z]}
    \draw [thick,canvas is xz plane at y=1.5,shift={(\X,\Y)},fill=\mycolor] (0.5,0) -- ({1-\radius},0) arc (-90:0:\radius) -- (1,{1-\radius}) arc (0:90:\radius) -- (\radius,1) arc (90:180:\radius) -- (0,\radius) arc (180:270:\radius) -- cycle;

    \pgfmathtruncatemacro{\Z}{2-\YY+3*\XX+18}
    \pgfmathsetmacro{\mycolor}{\myarray[\Z]}
    \draw [thick,canvas is yx plane at z=1.5,shift={(\X,\Y)},fill=\mycolor] (0.5,0) -- ({1-\radius},0) arc (-90:0:\radius) -- (1,{1-\radius}) arc (0:90:\radius) -- (\radius,1) arc (90:180:\radius) -- (0,\radius) arc (180:270:\radius) -- cycle;

    }
   }
 \end{scope}
\end{tikzpicture}
\end{center}

If we are allowed to take the cube apart and reassemble it, it follows
that there are as many movements as elements of the group
\[
\pare{\Cyc{3}\wr \S_8} \times \pare{\Cyc{2}\wr \S_{12}}.
\]
A \emph{position} of the cube is then a tuple
$(x,\rho,y,\sigma)$, where 
\begin{align*}
&x=(x_1,\ldots,x_8)\in (\Cyc{3})^{8}, 
&&
\rho\in\S_8,\\
&y=(y_1,\dots,y_{12})\in(\Cyc{2})^{12}, 
&&
\sigma\in\S_{12}.
\end{align*}

To understand the structure of the group $\Rub$, 
we refer to the following picture: 
\[
\Rub_3\simeq
  \vcenter{\hbox{ \TikZRubikFaceLeft    
         {R}{W}{R}
         {W}{W}{W}
         {R}{W}{R}
 \TikZRubikFaceRight
         {B}{W}{B}
         {W}{W}{W}
         {B}{W}{B}    
 \TikZRubikFaceTop
         {G}{W}{G}
         {W}{W}{W}
         {G}{W}{G}
 \BuildArray
\begin{tikzpicture}[scale=.5, transform shape]
 \clip (-3,-2.5) rectangle (3,2.5);
 \begin{scope}[tdplot_main_coords]
  \filldraw [canvas is yz plane at x=1.5] (-1.5,-1.5) rectangle (1.5,1.5);
  \filldraw [canvas is xz plane at y=1.5] (-1.5,-1.5) rectangle (1.5,1.5);
  \filldraw [canvas is yx plane at z=1.5] (-1.5,-1.5) rectangle (1.5,1.5);
  \foreach \X [count=\XX starting from 0] in {-1.5,-0.5,0.5}{
   \foreach \Y [count=\YY starting from 0] in {-1.5,-0.5,0.5}{
   \pgfmathtruncatemacro{\Z}{\XX+3*(2-\YY)}
   \pgfmathsetmacro{\mycolor}{\myarray[\Z]}
    \draw [thick,canvas is yz plane at x=1.5,shift={(\X,\Y)},fill=\mycolor] (0.5,0) -- ({1-\radius},0) arc (-90:0:\radius) -- (1,{1-\radius}) arc (0:90:\radius) -- (\radius,1) arc (90:180:\radius) -- (0,\radius) arc (180:270:\radius) -- cycle;
   
    \pgfmathtruncatemacro{\Z}{2-\XX+3*(2-\YY)+9}
    \pgfmathsetmacro{\mycolor}{\myarray[\Z]}
    \draw [thick,canvas is xz plane at y=1.5,shift={(\X,\Y)},fill=\mycolor] (0.5,0) -- ({1-\radius},0) arc (-90:0:\radius) -- (1,{1-\radius}) arc (0:90:\radius) -- (\radius,1) arc (90:180:\radius) -- (0,\radius) arc (180:270:\radius) -- cycle;

    \pgfmathtruncatemacro{\Z}{2-\YY+3*\XX+18}
    \pgfmathsetmacro{\mycolor}{\myarray[\Z]}
    \draw [thick,canvas is yx plane at z=1.5,shift={(\X,\Y)},fill=\mycolor] (0.5,0) -- ({1-\radius},0) arc (-90:0:\radius) -- (1,{1-\radius}) arc (0:90:\radius) -- (\radius,1) arc (90:180:\radius) -- (0,\radius) arc (180:270:\radius) -- cycle;

    }
   }
 \end{scope}
\end{tikzpicture}
  \Xsub{\sgn}
  \vcenter{\hbox{ \TikZRubikFaceLeft
         {W}{R}{W}
         {R}{W}{R}
         {W}{R}{W}    
 \TikZRubikFaceRight
         {W}{B}{W}
         {B}{W}{B}
         {W}{B}{W}      
 \TikZRubikFaceTop
         {W}{G}{W}
         {G}{W}{G}
         {W}{G}{W}
 \BuildArray
\begin{tikzpicture}[scale=.5, transform shape]
 \clip (-3,-2.5) rectangle (3,2.5);
 \begin{scope}[tdplot_main_coords]
  \filldraw [canvas is yz plane at x=1.5] (-1.5,-1.5) rectangle (1.5,1.5);
  \filldraw [canvas is xz plane at y=1.5] (-1.5,-1.5) rectangle (1.5,1.5);
  \filldraw [canvas is yx plane at z=1.5] (-1.5,-1.5) rectangle (1.5,1.5);
  \foreach \X [count=\XX starting from 0] in {-1.5,-0.5,0.5}{
   \foreach \Y [count=\YY starting from 0] in {-1.5,-0.5,0.5}{
   \pgfmathtruncatemacro{\Z}{\XX+3*(2-\YY)}
   \pgfmathsetmacro{\mycolor}{\myarray[\Z]}
    \draw [thick,canvas is yz plane at x=1.5,shift={(\X,\Y)},fill=\mycolor] (0.5,0) -- ({1-\radius},0) arc (-90:0:\radius) -- (1,{1-\radius}) arc (0:90:\radius) -- (\radius,1) arc (90:180:\radius) -- (0,\radius) arc (180:270:\radius) -- cycle;
   
    \pgfmathtruncatemacro{\Z}{2-\XX+3*(2-\YY)+9}
    \pgfmathsetmacro{\mycolor}{\myarray[\Z]}
    \draw [thick,canvas is xz plane at y=1.5,shift={(\X,\Y)},fill=\mycolor] (0.5,0) -- ({1-\radius},0) arc (-90:0:\radius) -- (1,{1-\radius}) arc (0:90:\radius) -- (\radius,1) arc (90:180:\radius) -- (0,\radius) arc (180:270:\radius) -- cycle;

    \pgfmathtruncatemacro{\Z}{2-\YY+3*\XX+18}
    \pgfmathsetmacro{\mycolor}{\myarray[\Z]}
    \draw [thick,canvas is yx plane at z=1.5,shift={(\X,\Y)},fill=\mycolor] (0.5,0) -- ({1-\radius},0) arc (-90:0:\radius) -- (1,{1-\radius}) arc (0:90:\radius) -- (\radius,1) arc (90:180:\radius) -- (0,\radius) arc (180:270:\radius) -- cycle;

    }
   }
 \end{scope}
\end{tikzpicture}
\]

By \cite[Theorem 1]{bandelow2012inside}, 
a position $(x,\rho,y,\sigma)$ is realistic (or possible) 
when it corresponds to a real configuration of the cube, and this 
happens if and only if the following conditions hold:
\begin{align*}
    &\sgn\rho =\sgn\sigma,\\
    &x_1+x_2+\cdots+x_8\equiv 0\bmod 3,\\
    &y_1+y_2+\cdots+y_{12}\equiv 0\bmod 2.
\end{align*}
Note that a real position corresponds exactly to a tuple that 
belongs to the orbit of the initial position 
of the cube. In particular, there are 
\[
43252003274489856000
\]
possible (legal) configurations 
for the Rubik's Cube.

For more information on the structure of group $\Rub$ we refer
to \cite[Section 2.5]{bandelow2012inside}. 







\TikZRubikFaceLeft
        {W}{W}{W}
        {W}{W}{W}
        {W}{W}{W}
\TikZRubikFaceRight
        {G}{G}{G}
        {G}{G}{G}
        {G}{G}{G}
\TikZRubikFaceTop
        {Y}{Y}{Y}
        {Y}{Y}{Y}
        {Y}{Y}{Y}
\BuildArray



%

\subsection{Number field background}

The following result is folklore. We include its proof for the reader's convenience.
\begin{lemma}\label{lem:lift}
Let $B$ be a commutative ring and $G \subseteq {\operatorname{Aut}} (B)$ be a finite group of ring automorphisms.
Consider $\phi, \psi : B \to D$ two ring homomorphisms from $B$ to an integral domain $D.$
Let $A = B^G$ be the subring of fixed elements and assume that $\phi$ and $\psi$ agree on $A.$
Then there exists $\sigma \in G$ such that $\phi = \psi \circ \sigma. $
\end{lemma}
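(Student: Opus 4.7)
The plan is to reduce the full statement to a pointwise version via the polynomial that every element of $B$ satisfies over $A$. Each $b \in B$ is a root of the monic polynomial
\[
p_b(X) = \prod_{\sigma \in G}(X - \sigma(b)) \in A[X],
\]
whose coefficients are $G$-invariant. Since $\phi$ and $\psi$ agree on $A$, applying either to the coefficients yields the same polynomial in $D[X]$, so
\[
\prod_{\sigma \in G}\bigl(X - \phi(\sigma(b))\bigr) = \prod_{\sigma \in G}\bigl(X - \psi(\sigma(b))\bigr).
\]
Evaluating at $X = \phi(b)$ makes the left-hand side vanish (take $\sigma = 1$), hence so does the right-hand side. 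Since $D$ is an integral domain, for each $b \in B$ there exists $\sigma \in G$ (depending on $b$) with $\phi(b) = \psi(\sigma(b))$.

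To obtain a single $\sigma$ that works for all $b$, I would exploit the prime ideals $\mathfrak{p} := \ker\phi$ and $\mathfrak{q} := \ker\psi$ of $B$, both lying over the same prime $\mathfrak{m} := \ker(\phi|_A) = \ker(\psi|_A)$ of $A$. Since $B$ is integral over $A$ (by $p_b$), and since $G$ acts transitively on the primes of $B$ lying over a fixed prime of $A$---a classical consequence of Cohen--Seidenberg applied to invariant subrings---one gets $\sigma_0 \in G$ with $\sigma_0(\mathfrak{p}) = \mathfrak{q}$. Replacing $\phi$ by $\phi \circ \sigma_0^{-1}$, which still coincides with $\psi$ on $A$ because $\sigma_0^{-1}$ fixes $A$, we may assume $\mathfrak{p} = \mathfrak{q}$.

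After this reduction, $\phi$ and $\psi$ descend to injective ring maps $\bar\phi,\bar\psi \colon B/\mathfrak{p} \hookrightarrow D$ that agree on $A/\mathfrak{m}$. Passing to fraction fields, $L := \operatorname{Frac}(B/\mathfrak{p})$ is a finite normal extension of $K := \operatorname{Frac}(A/\mathfrak{m})$, Galois with group the quotient of the decomposition subgroup $H \le G$ by its inertia subgroup; two embeddings $L \to \operatorname{Frac}(D)$ that agree on $K$ therefore differ by an element of $\operatorname{Gal}(L/K)$ that lifts to some $\tau \in H \subseteq G$. Composing back with $\sigma_0$, this gives $\psi = \phi \circ \tau$ for an element $\tau \in G$.

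The main obstacle is the globalization step. The pointwise version is essentially automatic from the polynomial trick, but promoting it to a uniform $\sigma$ requires the non-trivial commutative-algebraic input that $G$ acts transitively on the primes of $B$ over a fixed prime of $A = B^G$, together with the Galois-theoretic description of the induced residue field extension. A short, direct combinatorial argument---for instance, producing one element $b \in B$ that witnesses $\phi(b) \ne \psi(\sigma(b))$ simultaneously for every $\sigma$ outside the good one---does not seem available without extra hypotheses on $B$, because the obvious sets $\{b \in B : \phi(b) = \psi(\sigma(b))\}$ are subrings and a commutative ring can in general be covered by finitely many proper subrings.
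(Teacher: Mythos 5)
Your pointwise claim coincides with the paper's first step, down to the same polynomial $\prod_{\sigma\in G}(X-\sigma(b))\in A[X]$; the difference is entirely in the globalization, and there you take a genuinely different, correct, but much heavier route. You invoke two classical facts about $A=B^G$: that $G$ acts transitively on the primes of $B$ lying over a fixed prime of $A$, and that for $\mathfrak{p}=\ker\phi$ the residue extension $L=\operatorname{Frac}(B/\mathfrak{p})$ over $K=\operatorname{Frac}(A/\mathfrak{m})$ is normal with every $K$-automorphism induced by an element of the decomposition group (Bourbaki, Alg.\ Comm.\ V, \S 2). Granting these, your reduction to $\ker\phi=\ker\psi$ and the lifting of the residue automorphism do yield $\phi=\psi\circ\sigma$; only the phrase ``Galois with group the decomposition subgroup modulo inertia'' should be softened, since the residue extension may be inseparable (even infinite) --- what you actually need is normality together with surjectivity of $D_{\mathfrak{p}}\to\operatorname{Aut}_K(L)$, which does hold in that generality. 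The cost of your route is that this second fact is essentially a reformulation of the lemma itself (its textbook proofs contain the very argument being replaced), so your proof is correct but not self-contained. The paper instead globalizes with a short trick that directly answers the difficulty you flag at the end: assuming for each $\sigma$ a witness $b_\sigma$ with $\phi(b_\sigma)\neq\psi(\sigma(b_\sigma))$, it applies the pointwise claim inside the polynomial ring $B[\set{X_\sigma}_{\sigma\in G}]$ --- with $G$ acting on coefficients, $\phi,\psi$ extended coefficientwise, and $D[\set{X_\sigma}_{\sigma\in G}]$ still a domain --- to the single element $b=\sum_\sigma b_\sigma X_\sigma$; for the resulting $\tau$, comparing coefficients of $X_\tau$ gives $\phi(b_\tau)=\psi(\tau(b_\tau))$, a contradiction. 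So an elementary uniform argument does exist: rather than trying to cover $B$ by the subrings $\{b\in B:\phi(b)=\psi(\sigma(b))\}$, one adjoins indeterminates so that a single element tests all $\sigma$ simultaneously.
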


\begin{proof}
We begin by proving a slightly weaker statement:
for each $b \in B$ there is a 
$\sigma \in G$ such that $\phi(b) = \psi(\sigma(b)).$
Let 
\[
f(X)=\prod_{\sigma \in G} (X - \sigma(b)) \in A[X],
\]
apply both $\phi$ and $\psi$ to its coefficients and observe that
the roots of the resulting polynomial $f^\psi(X) =  f^\phi(X) \in D[X]$ 
are $\{ \psi(\sigma(b))  \}_{ \sigma \in G}$. 
The claim follows since $b$ is one of the roots of $f(X)$. 

Suppose now that the lemma is false.
This means that for every $\sigma \in G$ there is an element 
$b_\sigma$ such that $\phi(b_\sigma) \neq \psi(\sigma(b_\sigma))$. 
Now apply the weaker claim already proved with
$B[ \set{X_\sigma }_{ \sigma \in G}]$,
$A[ \set{X_\sigma }_{ \sigma \in G}]$, and
$D[ \set{X_\sigma }_{ \sigma \in G}]$ 
instead of $B$, 
$A$ and $D$, respectively, 
the natural extensions of $\phi$ and $\psi$, 
the group $G$ acting on the coefficients of 
$B[ \set{X_\sigma }_{ \sigma \in G}]$, 
and 
$b=\sum_\sigma b_\sigma X_\sigma$. 
\end{proof}

A direct consequence of this lemma is the following well-known result: 

\begin{theorem}[Dedekind]
\label{thm:Dedekind}
Let $f(X)\in \Z[X]$ be a monic polynomial and $p\in \Z$ a prime not dividing its discriminant.
Then there is an element 
$\sigma \in \Galf{f(X)}{\Q}$ whose cycle type matches the degrees of the irreducible factors of 
$\overline {f}(X) \in \F_p[X],$ 
the reduction of $f(X)$ modulo~$p$. 
\end{theorem}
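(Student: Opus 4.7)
My plan is to deduce Dedekind's theorem as a direct consequence of Lemma~\ref{lem:lift}, by taking the second ring homomorphism to be the composition of reduction mod $p$ with the Frobenius on the residue field. First I would let $L$ be a splitting field of $f$ over $\Q$, write $\alpha_1,\dots,\alpha_n \in L$ for the roots of $f$, and set $B = \Z[\alpha_1,\dots,\alpha_n]$. Since $f$ is monic with integer coefficients, each $\alpha_i$ is integral over $\Z$, so $B$ is integral over $\Z$ and is preserved by $G = \Galf{f(X)}{\Q}$. An element of $B \cap \Q$ is an algebraic integer lying in $\Q$, hence in $\Z$, so $B^G = B \cap L^G = B \cap \Q = \Z$.

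Next I would pick a maximal ideal $\mathfrak{p}$ of $B$ containing $p$ and let $\phi\colon B \to \mathbb{F} := B/\mathfrak{p}$ be the quotient; then $\mathbb{F}$ is a finite field of characteristic $p$, and the images $\bar{\alpha}_i = \phi(\alpha_i)$ are exactly the roots of $\bar{f} \in \F_p[X]$ inside $\mathbb{F}$. Because $p$ does not divide $\disc{f} = \pm\prod_{i<j}(\alpha_i-\alpha_j)^2$, these $\bar{\alpha}_i$ are pairwise distinct. I would then take $\psi = \mathrm{Frob}_p \circ \phi$, where $\mathrm{Frob}_p(x) = x^p$, and observe that $\phi$ and $\psi$ are both ring homomorphisms from $B$ to the integral domain $\mathbb{F}$ which agree on $B^G = \Z$ (the Frobenius is the identity on $\F_p$). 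Lemma~\ref{lem:lift} then produces $\sigma \in G$ with $\phi(\sigma(b)) = \phi(b)^p$ for every $b \in B$; in particular $\phi(\sigma(\alpha_i)) = \bar{\alpha}_i^{\,p}$ for each $i$.

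The final step is essentially combinatorial. Since $\sigma$ permutes $\{\alpha_1,\dots,\alpha_n\}$ and the $\bar{\alpha}_i$ are distinct, the identity above forces the cycle decomposition of $\sigma$ acting on the roots $\alpha_i$ to coincide with that of $\mathrm{Frob}_p$ acting on the $\bar{\alpha}_i$. But the $\mathrm{Frob}_p$-orbits on the roots of $\bar{f}$ are precisely the root sets of its irreducible factors in $\F_p[X]$, so their lengths are exactly the degrees of those factors, and $\sigma$ is the desired element. The main obstacle is really one of careful bookkeeping: verifying that $B^G = \Z$ so that Lemma~\ref{lem:lift} actually applies, and using the distinctness of the reductions (which is where the hypothesis $p\nmid \disc{f}$ enters) at the right moment to transport the cycle structure from $\mathbb{F}$ back to $L$.
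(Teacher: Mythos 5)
Your proof is correct and follows essentially the same route as the paper: you apply Lemma~\ref{lem:lift} to the reduction map modulo a prime above $p$ and its composition with $\operatorname{Frob}_p$, obtain a Frobenius element $\sigma$, and read off its cycle type from the Frobenius orbits on the (distinct, by $p\nmid\disc{f}$) roots of $\overline{f}$. The only cosmetic difference is that you work with $B=\Z[\alpha_1,\dots,\alpha_n]$ and verify $B^G=\Z$ directly, whereas the paper takes $B=\O_K$; both choices are fine and the argument is otherwise identical.
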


\begin{proof}
    Let $K$ be a splitting field of $f(X)$, $B=\O _ K$ its ring of integers and $G=\Gal(K/\Q)$. 
    Take $\mathfrak{P} \subseteq \O_K$ an ideal over $p\Z \subseteq \Z$. Let   
    $\psi\colon B\to B/\mathfrak{P}$ be the canonical map 
    and
    $\phi = {{\operatorname{Frob}}}_p \circ \psi$, where $\operatorname{Frob}_p$ denotes the Frobenius homomorphism, that is the map 
    $x\mapsto x^p$. 
    By Lemma~\ref{lem:lift}, there exists $\sigma \in G$ such that
    $\sigma( \mathfrak{P} ) \subseteq \mathfrak{P} $ and
    $\sigma(\alpha) \equiv \alpha^p \bmod{\mathfrak{P}}$ for all $\alpha \in B$ (this element
    $\sigma$ is called the  
    \emph{Frobenius element}). 

    The result follows by noting that the reduction modulo $p$ of $f(X)$, that is $\overline {f}(X) \in \F_p[X]$, is separable by the assumptions on the prime $p$ and the discriminant, and therefore the  
    ${{\operatorname{Frob}}}_p-$orbits of its roots in 
    an algebraic closure of $\F_p$ 
    are in correspondence with its irreducible factors.
\end{proof}

Similarly, we have the following 
specialization result:

\begin{proposition}\label{special}
Let 
$f(t,X) \in \Q(t)[X]$ be a monic separable polynomial. Consider $q \in \Q$ not a root of any denominator from the coefficients of $f$.
Let us also assume that $q$ is not a root of $\disc{f(t,X)} \in \Q(t)$. 
Then there is a natural embedding of $\Galf {f(q,X) }{\Q}$ in  
$\Galf{f(t,X)}{\Q(t)}$. 
\end{proposition}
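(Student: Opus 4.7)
The plan is to mimic the proof of Dedekind's theorem (Theorem~\ref{thm:Dedekind}), replacing reduction modulo a prime $p$ by specialization $t \mapsto q$. Write $L$ for the splitting field of $f(t,X)$ over $\Q(t)$, set $G = \Galf{f(t,X)}{\Q(t)}$, and let $\alpha_1, \ldots, \alpha_n \in L$ be the roots of $f(t,X)$. Let $S \subseteq \Q[t]$ be the multiplicative set of polynomials not vanishing at $q$, and put $A = S^{-1}\Q[t]$. By the hypotheses on $q$, all coefficients of $f(t,X)$ belong to $A$, and monicity makes each $\alpha_i$ integral over $A$; I will therefore work inside the ring $B = A[\alpha_1, \ldots, \alpha_n] \subseteq L$, on which $G$ acts.

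The two ring-theoretic facts needed about this setup are that $B$ is integral over $A$ and that $B^G = A$; the latter holds because $A$, being a localization of the integrally closed domain $\Q[t]$, is integrally closed in its fraction field $\Q(t) = L^G$. Using going-up I then pick a maximal ideal $\mathfrak{P}$ of $B$ lying over $(t-q)A$ and let $\pi \colon B \to B/\mathfrak{P}$ be the quotient. Each $\beta_i = \pi(\alpha_i)$ is a root of $f(q,X)$, and the hypothesis that $q$ is not a root of $\disc{f(t,X)}$ forces the $\beta_i$ to be pairwise distinct, so $B/\mathfrak{P}$ coincides with the splitting field $K$ of $f(q,X)$ over $\Q$.

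The desired embedding then comes from the decomposition subgroup $D = \{\sigma \in G : \sigma(\mathfrak{P}) \subseteq \mathfrak{P}\}$, which acts by reduction on $K$ and so yields a group homomorphism $D \to \Galf{f(q,X)}{\Q}$. Injectivity is immediate: a $\sigma \in D$ acting trivially on $K$ must fix every $\beta_i$, and since the $\beta_i$ are distinct while $\sigma$ permutes the $\alpha_i$, it must in fact fix every $\alpha_i$, hence all of $L$. For surjectivity I would apply Lemma~\ref{lem:lift} to the maps $\phi = \tau \circ \pi$ and $\psi = \pi$ from $B$ to $K$, where $\tau \in \Galf{f(q,X)}{\Q}$ is arbitrary: they agree on $B^G = A$, so the lemma produces some $\sigma \in G$ with $\phi = \psi \circ \sigma$, and one verifies at once that $\sigma(\mathfrak{P}) \subseteq \mathfrak{P}$ and that $\sigma$ induces $\tau$ on the residue field. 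This identifies $\Galf{f(q,X)}{\Q}$ with $D \subseteq G$.

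I expect the main obstacle to be purely bookkeeping: choosing $A$ carefully enough for all the coefficients to become regular at $q$, and pinning down the equality $B^G = A$ so that Lemma~\ref{lem:lift} applies cleanly. Once the discriminant hypothesis has done its job of ensuring that $f(q,X)$ is separable and the specialization is unramified, the rest of the argument is formal, and the naturality of the embedding comes from the fact that it is built out of the canonical specialization map and the $G$-action on $B$.
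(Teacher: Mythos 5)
Your proposal is correct and follows essentially the same route as the paper: specialize via a prime $\mathfrak{P}$ above $(t-q)$, use the decomposition group, get surjectivity onto $\Galf{f(q,X)}{\Q}$ from Lemma~\ref{lem:lift}, and injectivity from the non-vanishing of the discriminant at $t=q$. The only differences are cosmetic (you take $A=\Q[t]_{(t-q)}$ and $B=A[\alpha_1,\dots,\alpha_n]$ instead of $\Q[t,g^{-1}]$ and its full ring of integral elements, and you spell out $B^G=A$ and the injectivity step, which the paper leaves implicit).
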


\begin{proof}
    Consider $g(t)\in\Q[t]$ a common denominator for the coefficients of $f$ viewed as a polynomial in $X.$ 
    Let $K$ be the splitting field of $f$  over $\Q(t)$,
    $A = \Q[t, g^{-1}]\subseteq \Q(t)$  
    and $B \subseteq K$ be the ring of $A-$integral elements of $K.$
    Now pick any maximal 
    ideal  
    $\mathfrak{P} \subseteq B$ containing $t-q.$
    The specialization map 
    $A \to A/(t-q) = \Q$ extends to
    $ \psi : B \to B/\mathfrak{P}.$
    The \emph{decomposition group} 
    \[
    D_{\mathfrak{P}}=\{\sigma \in \Galf{f(t,X)}{\Q(t)} : \sigma(\mathfrak{P}) \subseteq  \mathfrak{P}\}
    \]
    acts on $B/\mathfrak{P}$ and fixes
    $A/(t-q).$ 
    Identifying $B/\mathfrak{P}$ with the splitting field of $f(q,X)$ over $\Q$ 
    we have a map 
    $D_{\mathfrak{P}} \to \Galf {f(q,X) }{\Q}.$ 
    It is surjective by Lemma~\ref{lem:lift}. The injectivity follows from the non-vanishing of $\disc{f(X)}$ at $t=q.$
\end{proof}


\begin{remark}\label{rem:selm}
    For $n\geq2$ let $f(X)=X^n - X - 1$. 
    By \cite[Theorem 1]{SELMER1956}, the polynomial $f(X)$
    is irreducible over $\Q$. Now 
    apply \cite[Theorem 1]{MR873881} to conclude that  
    $\Galf{f(X)}{\Q}\simeq\S_n$. 
\end{remark}

\begin{example}
\label{exa:X^n-tX-t}
The polynomial $$X^n - tX - t \in \Q(t)[X]$$ has~$\S_n$
as Galois group over $\Q(t).$ This can be seen by  
reducing modulo ${ ({t-1}) }$
(equivalently, specializing at $t=1$)
and Remark~\ref{rem:selm}.
\end{example}







\begin{example}\label{ex:A5}
The discriminant of 
\[
f(X) =  X^{5} + 20X + 16 
\]
is $2^{16}5^6$. It is a square, so 
$G=\Galf{f(X) }{\Q} \subseteq \A_5.$
Reducing modulo 7, 
we only find two roots, producing a $3-$cycle in $G.$ 
Also, its reduction modulo 3 is irreducible, producing a $5-$cycle. Therefore the Galois group of $f$ is $\A_5.$
\end{example}

\begin{example}\label{ex:wreathA5}
    Let us take $f(X^2)$ for $f(X)$ as in Example~\ref{ex:A5}.
    The roots of $f(X^2)$ are 
    $$
    \set{ \pm \sqrt{\alpha}\,: \,f(\alpha) = 0}.
    $$
    Then $\Galf{f(X^2)}{\Q}$ is isomorphic to a subgroup of 
    $\,\Cyc{2}\wr \A_5$. The blocks of imprimitivity of this Galois group are the five pairs 
    $\set{\sqrt{\alpha},-\sqrt{\alpha}}$,
    one per each root $\alpha$ of $f(X)$.
    Looking at $f(X^2)$ modulo 3, we see that it is irreducible. With a little more effort, one can check that the Galois group is actually isomorphic to the full group 
    $(\Cyc{2})\wr \A_5$, as in Example~\ref{ex:A5}.
\end{example}


\begin{example}\label{ex:subWreath}
    Consider instead 
    $-f(-X^2) = X^{10}+20X^2-16.$
    The roots of this polynomial are 
    $$
    \set{ \pm \sqrt{\beta}\,: \,f(-\beta) = 0}.$$
    Since the product of the roots of $f(-X)$ is a perfect square, 
    the group $\Galf{-f(-X^2)}{\Q}$ must preserve the product of the square roots.
    Therefore 
    \begin{align*}    
    \Galf{-f(-X^2) }{\Q}&\subseteq \set{ (z,\tau)\in (\Cyc{2})\wr \A_5 \,:\, \sum_{i=1}^5 z_i \equiv 0\bmod{2}}\\
    &=\pare{\Cyc{2}\wr \A_5}^\circ.
    \end{align*}
    Factorizing $-f(-X^2) $ module 3 and 7, we see that it is irreducible. As before, one can see that in this case we actually have 
    $\Galf{-f(-X^2)}{\Q}=\pare{\Cyc{2}\wr \A_5}^\circ$.
\end{example}


\begin{example}\label{ex:CubiSubWreath}
In a similar vein, let us take now 
$ f(X^3) = X^{15} + 20X^3 + 16$ for $f(X)$ as in Example~\ref{ex:A5}. 
Proceeding as before, after adjoining a primitive cube root of unity 
$\omega$, we deduce
    $$
    \Galf{f(X^3)}{\Q [ \omega ]} 
    =\Cyc{3}\wr \A_5.
    $$
\end{example}




We shall need some lemmas. 
The first one
is quite standard, see for example 
\cite[page 41]{samuelalgnumthy}. 

\begin{lemma}
\label{lem:discriminant}
Let $f(X) = X^{n} -aX+b$. Then 
\[
\disc{f(X)} = (-1)^{n(n-1)/2}(n^{n }b^{ n-1}-(n-1)^{n-1 }a^{n }).
\]
\end{lemma}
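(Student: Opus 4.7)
The plan is to use the classical identity $\disc{f}=(-1)^{n(n-1)/2}\operatorname{Res}(f,f')$ (valid since $f$ is monic) and then compute the resultant by evaluating $f$ at the critical points of $f$.

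First I would compute $f'(X)=nX^{n-1}-a$ and recall the formula
\[
\operatorname{Res}(f,f')=(-1)^{n(n-1)}\,n^{n}\prod_{f'(\beta)=0}f(\beta)=n^{n}\prod_{f'(\beta)=0}f(\beta),
\]
where the sign simplifies because $n(n-1)$ is even. The key observation is that if $\beta$ is a root of $f'$, then $\beta^{n-1}=a/n$, so
\[
f(\beta)=\beta^{n}-a\beta+b=\beta\cdot\frac{a}{n}-a\beta+b=b-\frac{(n-1)a}{n}\beta.
\]

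Next I would evaluate the product $\prod_{\beta}\bigl(b-\tfrac{(n-1)a}{n}\beta\bigr)$, where $\beta$ runs through the $n-1$ roots of $X^{n-1}-a/n$. Using the general identity
\[
\prod_{i=1}^{m}(b-c\beta_{i})=b^{m}-c^{m}r \qquad\text{when } \prod_{i}(X-\beta_{i})=X^{m}-r,
\]
applied with $m=n-1$, $c=(n-1)a/n$ and $r=a/n$, I obtain
\[
\prod_{\beta}f(\beta)=b^{n-1}-\Bigl(\frac{(n-1)a}{n}\Bigr)^{n-1}\cdot\frac{a}{n}=b^{n-1}-\frac{(n-1)^{n-1}a^{n}}{n^{n}}.
\]

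Multiplying by $n^{n}$ yields $\operatorname{Res}(f,f')=n^{n}b^{n-1}-(n-1)^{n-1}a^{n}$, and then the discriminant formula gives the claimed expression. There is no real obstacle here; the only point that requires a bit of care is the bookkeeping of the sign in the resultant formula (and noticing that $(-1)^{n(n-1)}=1$), which is entirely routine.
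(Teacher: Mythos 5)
Your proof is correct, and it follows a route dual to the one in the paper. The paper starts from the same identity $\disc{f}=(-1)^{n(n-1)/2}\prod_{i=1}^{n}f'(\alpha_i)$ but takes the product over the roots $\alpha_i$ of $f$: using $\alpha_i^{n}=a\alpha_i-b$ it rewrites $f'(\alpha_i)=a(n-1)-\frac{nb}{\alpha_i}$ and then evaluates $\prod_i f'(\alpha_i)$ with the help of the elementary symmetric functions of the $\alpha_i$ (essentially the constant term of $f$). You instead swap the roles of $f$ and $f'$ via the resultant symmetry $\operatorname{Res}(f,f')=(-1)^{n(n-1)}n^{n}\prod_{f'(\beta)=0}f(\beta)$ and exploit the fact that the critical points satisfy the binomial equation $\beta^{n-1}=a/n$, so the product collapses through the clean identity $\prod_i(b-c\beta_i)=b^{m}-c^{m}r$. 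What your version buys is that the auxiliary product is over the $n-1$ critical points, whose minimal polynomial is a binomial, making the closed form immediate; what the paper's version buys is that it never needs the resultant-swap sign bookkeeping (only the single sign in the discriminant formula) and works directly with the roots of $f$ itself. Both arguments are complete; your sign checks ($(-1)^{n(n-1)}=1$, exponent $n^{n}$ from $\operatorname{lc}(f')^{\deg f}$) are the only delicate points and you handled them correctly.
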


\begin{proof}
    Let $\alpha_1,\dots,\alpha_n$ be the roots of $f(X)$. To compute the
    discriminant of $f(X)$, we use the well-known formula
    \[
    \disc{f(X)}=(-1)^{\frac{n(n-1)}{2}}\prod_{i=1}^nf'(\alpha_i).
    \]
    For $i\in\{1,\dots,n\}$, a direct calculation shows that
    $f'(\alpha_i)= 
    a(n-1)-\frac{bn}{\alpha_i}$. Moreover, 
    \[
    \prod_{i=1}^n f'(\alpha_i)= 
    n^nb^{n-1}-(n-1)^{n-1}a^n
    .
    \]
    From this, the claim follows. 
\end{proof}


\section{The proof of Theorem \ref{thm:main}}
\label{section:proof}



Keeping in mind that $\Rub$ is
isomorphic to a subgroup of
\[
\pare{\Cyc{3} \wr \S_8} \times \pare{\Cyc{2} \wr \S_{12}},
\]
we look for a couple of polynomials
$f_{24}$, and $g_{24}$ whose Galois groups are the subgroups of the wreath products. They are made out of polynomials $f_8$ and $g_{12}$ having Galois groups $\S_8$ and $\S_{12}$ respectively.
In addition, we need to choose the polynomials in such a way that the composite of both splitting fields has its Galois group embedded in the desired fibered product.

The subscripts in the name of the polynomials denote the degree.



\subsection{${{\rm sign}}-$fibered product}

We need to construct two splitting fields $E$ and $F$ of some polynomials 
    $f_8(X)$ and $g_{12}(X)$ over $\Q$  
    with Galois groups $\S_8$ and $\S_{12}$, respectively. 
    The condition on the Galois group as fibered product translates to a condition on their discriminates in the following way:
    $$
    \disc{f_8(X)} \disc{g_{12}(X)}
    \in (\Q^\times )^2.
    $$
    That is to say, the product of their discriminants must be a perfect square.

\subsection{$\Cyc{2}-$extensions}

Let us consider the polynomial
$g_{12}(X) = X^{12} + t(X+1)$. By Example~\ref{exa:X^n-tX-t}, 
its Galois group is $\S_{12}$ for almost any value of $t$. 
To get a Galois group embedded in $\Cyc{2}\wr \S_{12}$, 
one can consider $g_{24}(X)=g_{12}(X^{2})$, 
in analogy with Example~\ref{ex:wreathA5}.


We need to impose a condition on $t$ to ensure that 
$\Galf{g_{24}(X)}{\Q} \subseteq (\Cyc{2}\wr \S_{12})^\circ$. 
One way to achieve this is to require that the product of the 
roots of $g_{12}(X)$ be a perfect square (see Example~\ref{ex:subWreath}).
Thus take for example $g_{12}(X) = X^{12} + r^2(X+1)$ for some $r\in \Q$. 

For the fiber product, we need some control on the discriminant of $g_{12}(X)$
modulo $(\Q^{\times})^2.$
By Lemma~\ref{lem:discriminant},
\begin{align*} 
\disc{g_{12}(X)} & = 
(-1)^{ 12 \choose  2} 
\pare{12^{12}(r^2)^{11} -11^{11}(-r^2)^{12}  }
\\
& = \pare{r^{11}12^{6}}^2\left(1 - 11 \cdot \left(\frac{r\cdot 11^5}{12^6}\right)^2\right ) 
\end{align*}
from which we get 
\begin{align}\label{e:PellDisc}
 \disc{g_{12}(X)} \equiv 1 - 11 u^2  \bmod{(\Q^{\times})^2} 
\end{align}
for some rational number $u$. In other words, 
$\disc{g_{12}(X)}$ is representable by the quadratic form
$ v^2 - 11w^2$ over the rationals.

\subsection{$\Cyc{3}-$extensions}
\label{ss:cyc3}

A natural way to construct a $\Cyc{3}-$extension is to take a cube root, provided the base field contains $\omega$, a primitive cube root of $1$
(as in Example~\ref{ex:CubiSubWreath}).

In fact, by Hilbert's Theorem~90, these are the cubic Galois extensions with enough roots of unity.
This means one needs to have 
$\Q[\sqrt{-3}]$ 
in the base field.
Having this subfield inside the splitting field of $f$ is achievable by taking its discriminant congruent to $-3$ in 
$\Q^\times / (\Q^\times)^2.$ But in this case the Galois group would also permute $\omega$ and ${\omega}^{-1}= \overline{\omega}.$
See the appendix for more on this.

One way to avoid this is to consider the parametric family of 
$\Cyc{3}-$extensions 
\[
X^3 -t X^2 +(t-3)X + 1 \in \Q(t)[X].
\]
This has the advantage that it does not need the presence of cube roots of unity.


Given an irreducible polynomial $f(X)$ with roots 
$\{\alpha =  \alpha_1, \alpha_2, \ldots \},$
we can get a $\Cyc{3}$ extension of $\Q[\alpha]$
by considering the polynomial
\[
\tilde{f}(X)=
(X(X-1))^{\deg f} f\left( \frac{X^3-3X+1}{X(X-1)} \right).  
\]

Note that 
$X^3 -\alpha X^2 +(\alpha-3)X + 1$
divides $\tilde{f}(X)$ in $\Q[\alpha][X]$. When it is an irreducible factor,  
any of its roots $\beta$ generates over $\Q[\alpha] $ a $\Cyc{3}-$extension.
The conjugates of $\beta$ over 
$\Q[\alpha]$ are
$$
\frac{ 1 }{ 1 - \beta}, 
\frac{\beta-1} {\beta }
\text{ and }
\beta.
$$

Thus, if
$$
f(X) = \prod_\alpha (X-\alpha),
$$
then
\begin{align*}
\tilde{f}(X)=
\prod_\alpha 
(X^3 -\alpha X^2 +(\alpha-3)X + 1).
\end{align*}

We are going to need the following
identity soon
\begin{align}\label{eq:ftilde}   
(-\omega X-\overline{\omega})^{\deg \tilde{f}}\tilde{f}
\left( \frac{X+1}{-\omega X-\overline{\omega}} \right) =
\prod_\alpha 
( ( -\alpha -3\overline{\omega})X^3 -\alpha-3\omega).
\end{align}

\subsection{Wreath product}

Let us consider $f_8(X) = X^8-t X-s .$ 
By a similar argument to the one in Example~\ref{exa:X^n-tX-t}
this polynomial will have $\Galf{f_8}{\Q(s,t)}\simeq \S_8.$ 
By Hilbert's irreducibility (see, for example, \cite[Chapter 1]{MR1405612}), 
almost every rational specialization of $s$ and $t$ gives
$\S_8$ as Galois group over $\Q.$

Considering thus
\begin{align}\label{eq:f24}
f_{24}(X)=
 (X^2-X)^8 f_8\left(\frac{ X^3-3X+1 }{X^2-X} \right) .
\end{align}

We need to impose conditions on $s$ and $t$ that guarantee that $\Galf{f_{24}}{\Q} $ is a subgroup of
$(\Cyc{3} \wr \S_8)^\circ$
(much like $g(0)$ a perfect square for $(\Cyc{2} \wr \S_{12})^\circ$).
Since after adjoining $\Q[\omega]$ to the base field one has that 
$\Cyc{3}-$extensions are precisely those given by the adjunction of a cube root, we can play the same game as before (after a suitable change of variables).
This can be accomplished as follows: 
Over $\Q[\omega]$ we can 
diagonalize the matrices
$
\begin{psmallmatrix}
0 & 1 \\ -1 & 1
\end{psmallmatrix}
$
and
$
\begin{psmallmatrix}
1 & -1 \\ 1 & 0
\end{psmallmatrix}
$
conjugating by
$
\begin{psmallmatrix}
1 & 1 \\ -\omega  & -\overline{\omega}
\end{psmallmatrix}
$. 
Therefore, the polynomial
\begin{align}\label{eq:ftildetrans}
(-\omega X-\overline{\omega})
^{24} f_{24}
\left( \frac{X+1}{-\omega X-\overline{\omega}} \right) 
\end{align}
has only monomials of degree multiple of $3$ (recall~\eqref{eq:ftilde}).

\subsection{Index $3$ subgroup}

Dividing~\eqref{eq:ftildetrans} by the leading coefficient, we end up with 
\begin{align*}
X^{24} 
+ & \frac{(-18 \omega - 21)t - 8s + 52488}{3\overline{\omega}t - s + 6561\omega}X^{21} + 
\cdots 
\\
& \cdots+\frac{(18\omega - 3)t - 8s + 52488}{ 3\overline{\omega} t - s + 6561\omega}X^3
+ \frac{3\omega t - s + 6561\overline{\omega} }{ 3\overline{\omega}t - s + 6561\omega},
\end{align*}                            
a monic polynomial in $X^3$ with coefficients in $\Q[\omega].$ Its Galois group over $\Q[\omega]$ 
embeds in 
$(\Cyc{3} \wr \S_8).$
One way to guarantee it embeds in $(\Cyc{3} \wr \S_8)^\circ$ is to ask for its constant term to be a perfect cube
(in analogy with  Example~\ref{ex:subWreath}).

                              
Therefore, we need to find $s,t \in \Q$ such that
$$
\frac{3\omega t - s + 6561\overline{\omega} }{ 3\overline{\omega}t - s + 6561\omega} 
$$
is a perfect cube in $\Q[\omega].$
Setting 
$$
3\omega t - s + 6561\overline{\omega}  = c(a+\omega b)^3
$$
with rational numbers $a,b,c$ we can solve for $s$ and $t.$





There are several parameter choices that yield and $f_8(X)$ with the desired Galois group. 
{For example, choosing $a=1$, $b=-1$ and $c=1$ gives $s=-6558$ and $t= 2185$.} 
For the fiber product with $\Galf{g_{24}(X)}{\Q}$, we need to find an $f_{8}(X)$ whose discriminant
is of the form $v^2-11w^2.$
This can be achieved in many ways. 
For instance $a=1$, $b=2$ and $c=-24$, 
leads to $t= 2139$ and $s=-6489$. 
Therefore,
$f_8(X)=X^8 -2139X + 6489$
with discriminant 
\[
\disc{f_8(X)} =          
3^8 \cdot 7^7 \cdot 1437417619559484462138047,
\]
which is of the form $v^2 - 11 w^2$ for
$v=106936663173678765$
and
$w=18262481960816352.$

We have to consider now
\[
r = 
\frac{12^6w}{11^5v}
=\frac{1962764241992810496}{619884697145165705}
\]
to get
$g_{24}(X) = X^{24} + r^2(X^2+1).$
Following the substitution~\eqref{eq:f24} we get
the polynomial $f_{24}(X).$ 
These $f_{24}(X)$ and $g_{24}(X)$
are the $f$ and $g$ from the statement of Theorem~\ref{thm:main}.

\subsection{Final step}
In each of the cases above, we can easily check that the Galois group
of $f_{24}(X)g_{24}(X)$ is as big as possible restricted to the imposed constraints. Namely
\[
|\Galf{f_{24}(X)g_{24}(X)}{\Q}|
=43252003274489856000.
\]
Here is the Magma code: 
\begin{lstlisting}[language=Magma]
> f8 := x^8 -2139*x + 6489;
> f24 := P!Numerator((x^2-x)^8*Evaluate(f8, 
> (x^3-3*x+1)/(x^2-x)));
> r := 1962764241992810496/619884697145165705
> g24 := x^24 + r^2*(x^2+1);
> #GaloisGroup(f24*g24);
43252003274489856000
\end{lstlisting}

\subsection{Other examples}

One can get other polynomials with the same method.

For instance, taking
$a=-18$, $b=-9$ and $c=2$, we get 
$f_8(X) = X^8 + 729X + 2187$. This polynomial has discriminant 
\[
3949085439326327289928812040905
=
3^{48}\cdot 5\cdot 269\cdot 36809, 
\]
with prime factors considerably smaller than $1437417619559484462138047$. Then 
\begin{align*}
f_{24}(X) = X^{24} &- 24X^{22} + 8X^{21} + 252X^{20} - 168X^{19} - 1484X^{18} \\
&+ 2241X^{17} +
    2250X^{16} - 11878X^{15} + 41931X^{14}\\
    &- 126147X^{13} + 234997X^{12} -
    255213X^{11} + 147633X^{10}\\
    &- 22354X^9 - 21006X^8 + 3951X^7 + 12880X^6 -
    12096X^5\\
    &+ 5502X^4 - 1504X^3 + 252X^2 - 24X + 1.
\end{align*}

Note that 
\[
3949085439326327289928812040905
=v^2-11w^2
\]
for $v = 1992257950336974$ and $w = 42646860008631$. 
In this case, 
\[
r = 
\frac{12^6w}{11^5v}
=\frac{225441792}{568026877}
\]
and 
\[
g_{24}(X) = X^{24} + \left(\frac{225441792}{568026877}\right)^2(X^2+1).
\]
The Galois group of $f_{24}(X)g_{24}(X)$ is isomorphic to 
$\Rub$. 




With $a=7$, $b=-15$ and $c=1$, one gets 
$f_8(X)= X^8 +123X - 1196$. Then
\begin{align*}
    f_{24}(X) = X^{24} &- 24X^{22} + 8X^{21} + 252X^{20} - 168X^{19} - 1484X^{18}\\
    &+ 1635X^{17} +
    3109X^{16} + 4278X^{15} - 44915X^{14}\\
    &+ 84511X^{13} - 65443X^{12} + 14833X^{11} -
    5873X^{10}\\
    &+ 30162X^9 - 30449X^8 + 4557X^7 + 12880X^6\\
    &- 12096X^5 +
    5502X^4 - 1504X^3 + 252X^2 - 24X + 1.
\end{align*}

In this case, the discriminant of $f_8$ is a prime number, namely 
\[
-58727088785134974217580322839=v^2-11w^2
\]
for $v = 760938559245$ and $w = 73067632314568$. 

Therefore, taking 
\[
r=\frac{12^6w}{11^5v}=\frac{24242086778798112768}{13616657322774055}, 
\]
we get  
\[
g_{24}(X)=X^{24} + \left(\frac{24242086778798112768}{13616657322774055}\right)^2(X^2+1).
\]
Again, one gets that $f_{24}(X)g_{24}(X)$ has Galois group
isomorphic to $\Rub$. 






\section{A parametric family: Proof of Theorem \ref{thm:parametric}}
\label{section:parametric}

In searching for Galois extensions of a given group, finding a so-called \emph{parametric family} of such extensions is always more desirable. This was helpful in Section~\ref{section:proof} to impose extra constraints on the parameters.


In this section, we show how to get a parametric family of polynomials 
\[
p(u,v,X) \in \Q(u,v)[ X ]
\]
having Galois group isomorphic to $\Rub$. Then Hilbert's Irreducibility Theorem implies that the same property is shared by all specializations of $p$ at rational pairs $(u,v)\in \Q^2$ outside of a \emph{thin subset} 
(in the sense of~\cite[\textsection 3.1]{MR2363329}).






Using the $\S_n$-family $X^n -t (X + 1) \in \Q(t)[X]$
(see Example~\ref{exa:X^n-tX-t}) one can take
\[
f(X) = X^8 -t (X + 1)\quad\text{and}\quad g(X) = X^{ 12}  - s (X + 1)
\]
and impose conditions on $t,s$
that guarantee the Galois group of $fg$ to embed in 
%
%
\[
\widehat \Rub = (\Cyc{3}\wr \S_8)\times_{\sgn}(\Cyc{2}\wr \S_{12})^\circ.
\]
Let $e=(1,\ldots,1)\in \Cyc{3}^8$. 
Since $8\equiv -1 \bmod 3$, the map 
\begin{align*}
(\Cyc{3}\wr \S_8) \to (\Cyc{3}\wr \S_8)^\circ, &&
( x, \rho)\mapsto \pare{  x - \pare{\frac{1}{8}\sum_{i=1}^8 x_i }e ,\rho}.
\end{align*}
induces the following split exact sequence
$$ 0\to \Cyc{3}\to \widehat \Rub \to \Rub \to 0. $$

Since $\Rub$ is a quotient $\widehat R$, one can show the existence of a parametric $\Rub$-family by taking the corresponding 
$\Cyc{3}$ fixed field of the $\widehat \Rub$-family.
The conditions needed are as follows:
\begin{enumerate}
    \item $g(0)\in (\Q^\times)^2$, and
    \item $\disc{ f(X)} \equiv 
    \disc{g(X)} \bmod{(\Q^\times)^2}.$
\end{enumerate}

Computing the discriminants leads to
\[
-t^7(8^8 + 7^7t)  \equiv -s^{11}(12^{12} +11^{11}s )\bmod{(\Q^\times)^2}, 
\]
which turns out to be equivalent to 
\[
t\pare{1 + \frac{7^7}{8^8}t}  \equiv s\pare{1 +\frac{11^{11}} {12^{12}}s}\bmod{(\Q^\times)^2}.
\]
This becomes
\begin{align}
\label{eq:last}
(7 + \Tilde{t}^{-1} ) & \equiv (11 + \Tilde{s}^{-1} ) \bmod{(\Q^\times)^2}
\end{align}
for 
$$
\Tilde{t} = \frac{7^6}{8^8}t \quad\text{and}\quad  
\Tilde{s} = \frac{11^{10}}{12^{12}}s.
$$
The first condition makes $g(0) = -s$ a perfect square. This is equivalent to
$\Tilde{s}  = -u^2$ for $u\in\Q^\times.$
Putting this back into~\eqref{eq:last}, we get
\begin{align*}
    t = -\frac{8^8}{7^6} {(7 + (11u^2 - 1)v^2)}^{ -1 }
    \quad \text{ and }\quad 
    s = -u^2 \frac{12^{12}}{11^{10}}.
\end{align*}

Therefore
\begin{align*}
    f(X) = X^8 -t (X + 1), && 
    g(X) = X^{ 12 } - s (X + 1), 
\end{align*}
with
\begin{align}
    (t,s) = \left( \frac{ -8^8}{7^6(7+(1-11u^2)v^2)}, 
    \frac{    -12^{12}}{11^{10}}u^2   \right),
\end{align}
give a polynomial
\begin{align}\label{eq:pRubhat}
    p(u,v,X)=
    (X^2-X)^8f(t,(X^3-3X+1)/(X^2-X)) g(s,X^2)
\end{align}
whose splitting field over $\Q(u,v)$
has Galois group
$$
\widehat \Rub = 
(\Cyc{3}\wr \S_8)\times_{\sgn}(\Cyc{2}\wr \S_{12})^\circ$$
as is easily seen by specializing at 
$u=v=1.$


\begin{proof}[Proof of Theorem \ref{thm:parametric}]
    We want to find a family of polynomials 
    $$h(u,v,X) \in \Q(u,v)[X]$$ 
    such that
    \[
    \Galf{h}{\Q(u,v)} \simeq \Rub.
    \]
    Consider the splitting field $F/\Q(u,v)$
    of $p$
    from \eqref{eq:pRubhat}.
    Since $\Rub\simeq \widehat{\Rub}/\pare{{\Cyc{3}}},$ one can take for $h$ the minimum polynomial of any $\gamma \in F$ generating the fixed field $F^{\pare{\Cyc{3}}}$ over $\Q(u,v)$.
\end{proof}

\section
{Appendix: A deceivingly similar group}

In the first draft of this manuscript, 
we proposed the polynomials, 
    \begin{align*}
     f(X) &=X^{24}+\frac{452984832}{14706125}(X^3+1),\\
    g(X) &=
     2(18 X^8 - 36X^4 - 16X^2 + 3)^3
     - 9 \frac{148233}{131072}
     (6X^6 - 9X^2 - 4)^4,
\end{align*}
whose product has a Galois group with structure 
\begin{align}
    \nonumber
    \pare{(\Cyc{3})^8 \rtimes \S_8}^\circ \times_{\sgn} \pare{\Cyc{2} \wr \S_{12}}^\circ.
\end{align}
This group turned out not to be isomorphic to 
$\Rub$ since the semidirect product in the first factor does not correspond to the sought wreath product.

In the wreath product, the group $\S_8$ acts on
$(\Cyc{3})^8$ naturally by permutations. The twist of this representation by the homomorphism $\sgn\colon \S_8 \to (\Cyc{3})^\times$ gives another action of $\S_8$ on $(\Cyc{3})^8.$ Therefore, there are at least 
two non-isomorphic transitive groups of degree 24 described as $\pare{(\Cyc{3})^8 \rtimes \S_8}^\circ.$

The polynomial $f(X)$ does not realize the intended transitive group of degree 24 with ID 24551 (following~\cite{MR1935567}), but instead gives rise to the distinct group with ID 24552. That these two transitive groups are non-isomorphic can be seen, for instance, by comparing their number of elements of order two or their number of conjugacy classes.


Even though the overall strategy is similar, the techniques to build the polynomials for each factor were somewhat different.
We decided to include it here for the sake of completeness.

We considered
splitting fields of $f_8(X^3)$ and $g_{12}(X^2)$ (see the disclaimer on the first paragraph of~\ref{ss:cyc3}).

The polynomials $f_8$ and $g_{12}$
were chosen so they have Galois groups
$\S_8$ and $\S_{12}$ respectively.
For the fibered product condition we required their discriminates to be congruent modulo squares.

We imposed the extra condition of $f_8(0)$ being a cube and $g_{12}(0)$ a square (this is to make sure that the sums $\sum x \in \Cyc{3}$ and $\sum y \in \Cyc{2} $ vanish) together with $\Q[\omega]$ inside of their splitting field. This ensured that cubic extensions come from cube roots, but has the drawback that the the Galois group permutes $\omega$ and 
$\overline\omega$ (giving thus the twist of the permutation representation by $\sgn:\S_8 \to \Cyc{3}^\times$).

\subsection*{A family of polynomials of degree twelve}

We want to find  an irreducible polynomial $g_{12}(X)$ of degree $12$ 
with Galois group $\S_{12}$
such that the Galois group of
$g_{12}(X^2)$ inside the factor $\Cyc{2} \wr \S_{12}$ is
$\{ (y, \sigma) : \sum y = 0 \bmod{2} \}.$
For that purpose, let 
\[
h(X) = 2(18X^4 - 36X^2 - 16X + 3)^3 - 9t(6X^3 - 9X - 4)^4 .
\]
This polynomial comes from  
Example 12c of~\cite[Page 10]{Malle_Roberts_2005}. 


As we need a $t$-specialization that gives a discriminant in the coset
$-3 (\Q^\times)^2$ and constant term
in the subgroup $(\Q^\times)^2$, let 
\[
g_{12}(X)=\frac{h(X)}{-11664t + 11664}.
\]

A direct calculation shows that 
the discriminant of $g_{12}$ is 
\[
-2^{-25}  3^{-59}
(t - 1)^{-17} t^8
\]
and thus it is 
congruent to
$-6(t-1) \in 
\Q(t)^\times/
(\Q(t)^\times)^2$. 
It follows that $t-1 = 2u^2$
for some $u\in\Q^\times.$

The constant term of $g$ is
\[
g_{12}(0) = \frac{1}{t - 1}\left(\frac{16}{81}t - \frac{1}{216}\right)
\]
and thus it is congruent to
$
(t-1)(t-\frac{3}{128})
\in 
\Q(t)^\times/
(\Q(t)^\times)^2
$. Since $t-1=2u^2$, it follows that 
$ t - \frac{3}{128} = 2 v^2 $
for some $v\in\Q^\times$. We want to solve
\[
2u^2 + \frac{125}{128} = 2v^2
\]
in non-zero rationals. 
A direct calculation shows that the rational points on this hyperbola are parametrized 
by
\[
(u,v) =  
\left( \frac{s}{2}-\frac{125}{512s},\frac{s}{2}+\frac{125}{512s}
\right)
\]
for $s \in \Q^\times$. Taking any non-zero rational number $s$ and
setting 
\begin{align}\label{eq:t2}
t=1+\frac12\left(s-\frac{125}{256s}\right)^2,     
\end{align}
the specialization of the  
polynomial 
$g_{12}$ 
satisfies the two required conditions.




We now check with Magma \cite{zbMATH01077111}, for example, that the polynomial 
\[
g_{12}(X^2) = 2(18X^8 - 36X^4 - 16X^2 + 3)^3 - \frac{9}{2}\left(1-\frac{125}{256}\right)^2 (6X^6 - 9X^2 - 4)^4  
\]
has Galois group of order 
$$ 980995276800 =2^{21}\cdot3^5\cdot5^2\cdot7\cdot 11 = |(\Cyc{2}\wr\S_{12})^\circ|.$$










\subsection*{A family of polynomials of degree eight}

Now we need an irreducible polynomial $f_8(X)$ of degree $8$ 
with Galois group $\S_{8}$
such that the Galois group of
$f_8(X^3)$ inside the factor $\Cyc{3} \wr \S_{8}$ is
$\pare{\Cyc{3}\wr \S_8}^\circ.$
Let  
\[
f_8(X) = X^8 - t(X+1) \in \Q(t)[X]
\]
as in Example~\ref{exa:X^n-tX-t}. 

We need $f_8(0) = -t$ to be a perfect cube and $\disc{f_8(X)} \in -3(\Q^\times )^2$.
For $s\in\Q$, let $t = s^3$. 
By Lemma \ref{lem:discriminant}, 
 \[ 
 \disc{f_8(X)} = -t^7(8^8+7^7t) = 
 -s^{21} (8^8 +7^7s^3)
 = -3 \bmod (\Q^\times )^2.
  \]

Setting $ u = \frac{s7^2}{2^8}$ in the previous expression, 
we see that we need to solve the 
diophantine equation
\[
3u(7u^3+1) = v^2.
\]

Letting $x = 3/{u}$ and $y = 3v/{u^2}$, we arrive at the following elliptic curve:
\[
E : y^2 = x^3 + 189.
\] 
One easily sees that the Mordell--Weil group $E(\Q)$ of $E$ 
is generated by $P = (-5,8)$. Letting $(x_n, y_n) = nP \in E(\Q)$
with $n\in \Z$, we obtain  
\begin{align}\label{eq:t1}
t = s^3 = \pare{ \frac{2^8 u}{7^2} }^3
= \pare{ \frac{2^8 3}{7^2 x_n} }^3. 
\end{align}
For example, for $n=1$ one gets $t =  -452984832/14706125$. 
We can generally use the group structure of $E(\Q)$ to compute the values of $x_n$ and $y_n$. More precisely, 
for $n\geq2$, we have 
\begin{align*}
x_{n+1}=\left(\frac {y_{n}-8}{x_{n}+5}\right)^{2}+5-x_{n}, &&
y_{n+1}=\frac {y_{n}-8}{x_{n}+5}(-5-x_{n+1})-8. 
\end{align*}
The following table shows some concrete values of $x_n$:
\begin{center}
 \begin{tabular}{c|c }
 $n$ & $x_n$ \\ 
 \hline
 $1$ & $-5$ \\
 $2$ & $8185/256$  \\
 $3$ & $-67697909/89586225$ \\
 $4$ & $4280596055755105/564755072459776$ \\
 $5$ &
$2421183698073114509087275/563391227230105852836241$ 
 \end{tabular}
\end{center}

Putting $x_1 = -5 $ in~\eqref{eq:t1} we get
\begin{align*}
t = \pare{ \frac{2^8 3}{7^2 (-5)} }^3 = 
\frac{-452984832}{14706125}.
\end{align*}

We now check with Magma \cite{zbMATH01077111} that the polynomial 
\[
f_8(X^3)=X^{24}+\frac{452984832}{14706125}(X^3+1)
\]
has Galois group of order 
$88179840 = 3^7 8! = 
| (\Cyc{3} \wr \S_8)^\circ |$.

\section*{Acknowledgments}

We thank the ICTP for holding the 
School and Workshop on Number Theory and Physics in 2024, and 
Benjamin Sambale for comments and corrections. 
Mereb was partially supported by PIP 2021--2024 11220210100220CO (Conicet). 
Vendramin was supported in part by OZR3762
of Vrije Universiteit Brussel
and FWO Senior Research Project G004124N.


\bibliographystyle{abbrv}
\bibliography{refs}

\begin{thebibliography}{10}

\bibitem{bandelow2012inside}
C.~Bandelow.
\newblock Inside {Rubik}'s cube and beyond, with 21 cartoons by {Alexander} {Maga}. {Transl}. from the {German} by {Jeannette} {Zehnder} and {Lucy} {Moser}.
\newblock Boston-{Basel}-{Stuttgart}: {Birkh{\"a}user}. 125 p. {DM} 18.80 (1982), 1982.

\bibitem{MR534593}
G.~V. Bely\u{\i}.
\newblock Galois extensions of a maximal cyclotomic field.
\newblock {\em Izv. Akad. Nauk SSSR Ser. Mat.}, 43(2):267--276, 479, 1979.

\bibitem{MR1935567}
H.~U. Besche, B.~Eick, and E.~A. O'Brien.
\newblock A millennium project: constructing small groups.
\newblock {\em Internat. J. Algebra Comput.}, 12(5):623--644, 2002.

\bibitem{zbMATH01077111}
W.~Bosma, J.~Cannon, and C.~Playoust.
\newblock The {Magma} algebra system. {I}: {The} user language.
\newblock {\em J. Symb. Comput.}, 24(3-4):235--265, 1997.

\bibitem{MR925984}
R.~F. Coleman.
\newblock On the {G}alois groups of the exponential {T}aylor polynomials.
\newblock {\em Enseign. Math. (2)}, 33(3-4):183--189, 1987.

\bibitem{MR1138725}
D.~S. Dummit and R.~M. Foote.
\newblock {\em Abstract algebra}.
\newblock Prentice Hall, Inc., Englewood Cliffs, NJ, 1991.

\bibitem{MR453746}
M.~Fried.
\newblock Fields of definition of function fields and {H}urwitz families---groups as {G}alois groups.
\newblock {\em Comm. Algebra}, 5(1):17--82, 1977.

\bibitem{GAP4}
The GAP~Group.
\newblock {\em {GAP -- Groups, Algorithms, and Programming, Version 4.13.0}}, 2024.

\bibitem{MR1580277}
D.~Hilbert.
\newblock Ueber die {I}rreducibilit\"at ganzer rationaler {F}unctionen mit ganzzahligen {C}oefficienten.
\newblock {\em J. Reine Angew. Math.}, 110:104--129, 1892.

\bibitem{MR3822366}
G.~Malle and B.~H. Matzat.
\newblock {\em Inverse {G}alois theory}.
\newblock Springer Monographs in Mathematics. Springer, Berlin, 2018.
\newblock Second edition.

\bibitem{Malle_Roberts_2005}
G.~Malle and D.~P. Roberts.
\newblock Number fields with discriminant {$\pm 2^a3^b$} and {G}alois group {$A_n$} or {$S_n$}.
\newblock {\em LMS J. Comput. Math.}, 8:80--101, 2005.

\bibitem{MR524980}
B.~H. Matzat.
\newblock Konstruktion von {Z}ahlk\"{o}rpern mit der {G}aloisgruppe {$M\sb{11}$} \"{u}ber {${\mathbf Q}(\surd -{}11)$}.
\newblock {\em Manuscripta Math.}, 27(1):103--111, 1979.

\bibitem{zbMATH01458933}
E.~Nart and N.~Vila.
\newblock Equations of the type {{\(X^{n}+aX^{2}+bX+c,\)}} {{\(n\)}} an odd square, with absolute galois group {{\(A_{n}\)}}.
\newblock In {\em Actas de las VI jornadas de matemáticas hispano-lusas. Part II}, pages 826--828. Santander: Univ. Santander, rev. {Univ}. {Santander} {No}. 2, {Part} 2 edition, 1979.

\bibitem{zbMATH01458934}
E.~Nart and N.~Vila.
\newblock Equations of the type {{\(X^n+aX+b\)}} with absolute galois group {{\(S_n\)}}.
\newblock In {\em Actas de las VI jornadas de matemáticas hispano-lusas. Part II}, pages 821--825. Santander: Univ. Santander, rev. {Univ}. {Santander} {No}. 2, {Part} 2 edition, 1979.

\bibitem{MR2392026}
J.~Neukirch, A.~Schmidt, and K.~Wingberg.
\newblock {\em Cohomology of number fields}, volume 323 of {\em Grundlehren der mathematischen Wissenschaften [Fundamental Principles of Mathematical Sciences]}.
\newblock Springer-Verlag, Berlin, second edition, 2008.

\bibitem{MR1511893}
E.~Noether.
\newblock Gleichungen mit vorgeschriebener {G}ruppe.
\newblock {\em Math. Ann.}, 78(1):221--229, 1917.

\bibitem{MR873881}
H.~Osada.
\newblock The {G}alois groups of the polynomials {$X^n+aX^l+b$}.
\newblock {\em J. Number Theory}, 25(2):230--238, 1987.

\bibitem{MR1581540}
H.~Reichardt.
\newblock Konstruktion von {Z}ahlk\"{o}rpern mit gegebener {G}aloisgruppe von {P}rimzahlpotenzordnung.
\newblock {\em J. Reine Angew. Math.}, 177:1--5, 1937.

\bibitem{MR2674854}
L.~Ribes and B.~Steinberg.
\newblock A wreath product approach to classical subgroup theorems.
\newblock {\em Enseign. Math. (2)}, 56(1-2):49--72, 2010.

\bibitem{samuelalgnumthy}
P.~Samuel.
\newblock {\em Algebraic theory of numbers}.
\newblock Houghton Mifflin Co., Boston, MA, 1970.
\newblock Translated from the French by Allan J. Silberger.

\bibitem{MR1545668}
A.~Scholz.
\newblock Konstruktion algebraischer {Z}ahlk\"{o}rper mit beliebiger {G}ruppe von {P}rimzahllpotenzordnung {I}.
\newblock {\em Math. Z.}, 42(1):161--188, 1937.

\bibitem{SELMER1956}
E.~S. Selmer.
\newblock On the irreducibility of certain trinomials.
\newblock {\em Math. Scand.}, 4:287--302, 1956.

\bibitem{MR2363329}
J.-P. Serre.
\newblock {\em Topics in {G}alois theory}, volume~1 of {\em Research Notes in Mathematics}.
\newblock A K Peters, Ltd., Wellesley, MA, second edition, 2008.
\newblock With notes by Henri Darmon.

\bibitem{zbMATH03118500}
I.~R. Shafarevich.
\newblock Construction of fields of algebraic numbers with given solvable {Galois} group.
\newblock {\em Transl., Ser. 2, Am. Math. Soc.}, 4:185--237, 1956.

\bibitem{zbMATH04114880}
I.~R. Shafarevich.
\newblock On factors of a decreasing central series.
\newblock {\em Math. Notes}, 45(3):262--264, 1989.

\bibitem{zbMATH03438994}
K.-y. Shih.
\newblock On the construction of {Galois} extensions of function fields and number fields.
\newblock {\em Math. Ann.}, 207:99--120, 1974.

\bibitem{MR244215}
R.~G. Swan.
\newblock Invariant rational functions and a problem of {S}teenrod.
\newblock {\em Invent. Math.}, 7:148--158, 1969.

\bibitem{zbMATH03879038}
J.~G. Thompson.
\newblock Some finite groups which appear as {{\(\mathrm{Gal } L/K\)}}, where {{\(K\subseteq \mathbb{Q}(\mu_n)\)}}.
\newblock {\em J. Algebra}, 89:437--499, 1984.

\bibitem{MR1405612}
H.~V\"{o}lklein.
\newblock {\em Groups as {G}alois groups}, volume~53 of {\em Cambridge Studies in Advanced Mathematics}.
\newblock Cambridge University Press, Cambridge, 1996.
\newblock An introduction.

\bibitem{Zassen82}
H.~Zassenhaus.
\newblock Rubik's cube: a toy, a {G}alois tool, group theory for everybody.
\newblock {\em Phys. A}, 114(1-3):629--637, 1982.

\end{thebibliography}









\end{document}